\numberwithin{equation}{section}
\theoremstyle{plain}
\newtheorem{theorem}{Theorem}[section]
\newtheorem{lemma}[theorem]{Lemma}
\newtheorem{corollary}[theorem]{Corollary}
\theoremstyle{definition}
\newtheorem{definition}[theorem]{Definition}
\newtheorem{example}[theorem]{Example}
\theoremstyle{remark}
\newtheorem{remark}[theorem]{Remark}
\newtheorem{case[theorem]}{Case}
\def\bc{\begin{corollary}}
\def\ec{\end{corollary}}
\def\be{\begin{equation}}
\def\ee{\end{equation}}
\def\bast{\begin{eqnarray*} }
\def\east{\end{eqnarray*} }
\def\bea{\begin{eqnarray}}
\def\eea{\end{eqnarray}}
\def\qed{\hfill\Box\smallskip}
\def\R{\mathbb R}
\def\R{\Bbb R}
\def\({\left(}
\def\){\right)}
\def\[{\left[}
\def\]{\right]}
\def\<{\left\langle}
\def\>{\right\rangle}
\author{A. Iosevich}
\address[A. Iosevich]{Department of Mathematics, University of Rochester, Rochester, NY}
\email{iosevich@math.rochester.edu}
\author{E. Palsson}
\address[E. Palsson]{Department of Mathematics, Virginia Tech, Blacksburg, VA}
\email{palsson@vt.edu}
\author{E. Wyman}
\address[E. Wyman]{Department of Mathematics and Statistics, Binghamton University, Binghamton, NY}
\email{ewyman@math.binghamton.edu}
\author{Y. Zhai}
\address[Y. Zhai]{Institute for Advanced Study in Mathematics, Harbin Institute of Technology, China}
\email{yzhai@hit.edu.cn}
\thanks{The first listed author is supported in part by the National Science Foundation grant no. HDR TRIPODS - 1934962. The third author is supported in part by the National Science Foundation grant no. DMS-2204397.}
\thanks{}
\begin{document}

\title{Multi-linear forms, structure of graphs and Lebesgue spaces}

\maketitle

\begin{abstract} Consider the operator
$$T_Kf(x)=\int_{{\mathbb R}^d} K(x,y) f(y) dy,$$ where $K$ is a locally integrable function or a measure. The purpose of this paper is to study the multi-linear form 
$$ \Lambda^K_G(f_1, \dots, f_n)=\int \dots \int \prod_{ \{(i,j): 1 \leq i<j \leq n; E(i,j)=1 \} } K(x^i,x^j) \prod_{i=1}^n f_i(x^i) dx^i, $$ where $G$ is a connected graph on $n$ vertices, $E$ is the edge map on $G$, i.e $E(i,j)=1$ if and only if the $i$'th and $j$'th vertices are connected by an edge, $K$ is the aforementioned kernel, and $f_i: {\mathbb R}^d \to {\mathbb R}$, measurable.

This paper establishes multi-linear inequalities of the form 
$$ \Lambda^K_G(f_1,f_2, \dots,f_n) \leq C {||f_1||}_{L^{p_1}({\mathbb R}^d)} {||f_2||}_{L^{p_2}({\mathbb R}^d)} \dots {||f_n||}_{L^{p_n}({\mathbb R}^d)}$$ and determines how the exponents depend on the structure of the kernel $K$ and the graph $G$. 
\end{abstract}  

\maketitle

\section{Introduction}

\vskip.125in 

A ubiquitous object in analysis is an operator of the form 
$$T_Kf(x)=\int_{{\mathbb R}^d} K(x,y) f(y) dy,$$ where $K$ is a locally integrable function or a measure. The purpose of this paper is to study the multi-linear form 
\begin{equation} \label{multilinearmama} \Lambda^K_G(f_1, \dots, f_n)=\int \dots \int \prod_{ \{(i,j): 1 \leq i<j \leq n; E(i,j)=1 \} } K(x^i,x^j) \prod_{i=1}^n f_i(x^i) dx^i \end{equation} where $G$ is a connected graph on $n$ vertices, $E$ is the edge map on $G$, i.e $E(i,j)=1$ if and only if the $i$'th and $j$'th vertices are connected by an edge, $K$ is the aforementioned kernel, and $f_i: {\mathbb R}^d \to {\mathbb R}$, measurable. Note that the form is well-defined if 
\begin{equation} \label{localintegrability} \prod_{ \{(i,j): 1 \leq i<j \leq n; E(i,j)=1 \} } K(x^i,x^j) \end{equation} is locally integrable in $x^1, \dots, x^n$. In the case when $K$ is a measure, we consider 
$$\Lambda^{K,\epsilon}_G(f_1, \dots, f_n)=\int \dots \int \prod_{ \{(i,j): 1 \leq i<j \leq n; E(i,j)=1 \} } K^{\epsilon}(x^i,x^j) \prod_{i=1}^n f_i(x^i) dx^i$$ and ask for local integrability with constants independent of $\epsilon$, where $K^{\epsilon}=K*\rho_{\epsilon}$, where $\rho$ is non-negative cutoff function with $\int \rho =1$. 

Determining for which connected graphs $G$ the expression (\ref{localintegrability}) is locally integrable already poses some interesting challenges for a given kernel $K$. An analogous problem was studied in \cite{BIKP23} in the setting of vector spaces over finite fields. In the Euclidean case, when $K(x,y)={|x-y|}^{-s}$, $s>0$, and $G$ is the complete graph, this problem was studied in (\cite{SWY19}). There are also some connections with the Brascamp-Lieb-type inequalities studied by many authors over the past several decades. See Remark \ref{rem: brascamp-lieb} below for more details. 

\vskip.125in 

More generally, we may consider 
\begin{equation} \label{Multilinearfractalmama}\Lambda^{K,\mu}_G(f_1, \dots, f_n)=\int \dots \int \prod_{ \{(i,j): 1 \leq i<j \leq n; E(i,j)=1 \} } K(x^i,x^j) \prod_{i=1}^n f_i(x^i) d\mu(x^i), \end{equation} where $\mu$ is a compactly supported Borel measure. 

These types of forms naturally arise in several contexts, and we are particularly motivated by the study of finite point configurations (see e.g. \cite{BIT15}, \cite{EIT11}, \cite{Fal86}, \cite{IKSTU19}, \cite{IT19}, \cite{GIT22}, \cite{M95}, \cite{Mat2015} and the references contained therein), where the goal is to show that if the underlying set is sufficiently large, in the sense of Lebesgue density, Hausdorff dimension, etc., then a given geometric configuration, such as every sufficiently large distance, or an equilateral triangle can be realized inside this set in a suitable sense. This version of the problem will be studied systematically in the sequel. See also Section \ref{future} below where we outline further steps in this direction. 

Similar definitions can be created if the linear operator $T_K$ is replaced by a multi-linear operator, and the graph $G$ is replaced by a hyper-graph. We shall expand upon this idea in Section \ref{future} below. 

\vskip.125in 

The class of operators we are going to consider in this paper are linear $L^p$-improving operators. We shall need the following definition. 

\begin{definition} \label{lpimprovingdef} Let $T_K$ be as above. We say that $T_K$ is $L^p$-improving (for a particular 
$p \in (1,\infty)$) if there exists $q>p$ such that $T_K:L^p({\mathbb R}^d) \to L^q({\mathbb R}^d)$. \end{definition} 

\begin{definition} \label{universallylpimprovingdef} Let $T_K$ be as above. We say that $T_K$ is {\it universally} $L^p$-improving, if for every $q \in (1,\infty)$, there exists $1<p<q$ such that 
$$ T_K: L^p({\mathbb R}^d) \to L^q({\mathbb R}^d).$$ 
\end{definition} 

\begin{definition} \label{multilinearlpimproving} Let $U$ be an $m$-linear operator. We say that $U$ is $L^p$-improving if there exist $r, p_1, \dots, p_m$ in $[1, \infty]$ such that 
\begin{equation} \label{sumisbigger} \sum_{i=1}^m \frac{1}{p_i}>\frac{1}{r}, \end{equation} and 
$$ {||U(f_1, \dots, f_m)||}_{L^r({\mathbb R}^d)} \leq C {||f_1||}_{L^{p_1}({\mathbb R}^d)} \cdot {||f_2||}_{L^{p_2}({\mathbb R}^d)} \dots \cdot {||f_m||}_{L^{p_m}({\mathbb R}^d)}.$$
\end{definition} 

\vskip.125in 

\begin{definition} \label{universallymultilinearlpimproving} Let $U$ be an $m$-linear operator. We say that $U$ is universally $L^p$-improving if for every $r >1$ there exist $p_1, \dots, p_m$ in $[1, \infty]$ such that (\ref{sumisbigger}) holds, and 
$$ {||U(f_1, \dots, f_m)||}_{L^r({\mathbb R}^d)} \leq C {||f_1||}_{L^{p_1}({\mathbb R}^d)} \cdot {||f_2||}_{L^{p_2}({\mathbb R}^d)} \dots \cdot {||f_m||}_{L^{p_m}({\mathbb R}^d)}.$$
\end{definition} 

\vskip.125in 

\begin{definition} \label{GformLpdef} Let $T_K$ and $\Lambda^K_G$ be as above, with $T_K$ $L^p$-improving. We say that 
$\Lambda^K_G$ is $L^p$-improving if there exist $p_1,p_2, \dots, p_n$, such that 
$$ \frac{1}{p_1}+\frac{1}{p_2}+\dots+\frac{1}{p_n}>1,$$ and 
\begin{equation} \label{mamabound} \Lambda^K_G(f_1,f_2, \dots,f_n) \leq C {||f_1||}_{L^{p_1}({\mathbb R}^d)} {||f_2||}_{L^{p_2}({\mathbb R}^d)} \dots {||f_n||}_{L^{p_n}({\mathbb R}^d)}, \end{equation} 
\end{definition} 

\vskip.125in 

The questions we ask are the following: 

\begin{itemize} 

\vskip.125in 

\item {\bf Question 1:} Given a kernel $K$, with $T_K$ $L^p$-improving, for which connected graphs $G$ does the multi-linear form $\Lambda_G$ satisfy better than Holder bounds? 

\vskip.125in 

\item {\bf Question 2:} How does the structure of the graph $G$ influence the $L^p$ norms of the functions needed to bound $\Lambda^K_G(f_1, \dots, f_n)$? 

\vskip.125in 

\item{\bf Question 3:} Is it true that if $G$ is a subgraph of $G'$ and $\Lambda_{G'}^K$ satisfies (\ref{mamabound}), then $\Lambda_G^K$ also satisfies (\ref{mamabound}). 
\end{itemize} 

\vskip.25in 

In \cite{BIKP23}, the authors showed in the context of ${\mathbb F}_q^d$, the $d$-dimensional vector space over the finite field with $q $ elements, that if $K$ is the indicator function of a sphere of non-zero radius, then while the first graph above (the $4$-cycle) is a subgraph of the second graph (the $4$-cycle with an additional edge), the answer to Question 3 is negative. See Figure \ref{twoquads} below. We were not able to prove this in the Euclidean setting and hope to address this in the sequel. More details on work in progress and possible extensions of the results in this paper can be found in Section \ref{future}. 

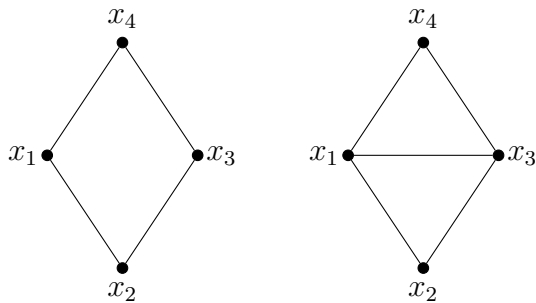
\begin{figure}[H]
\begin{tikzpicture}[node distance=2cm]

  \tikzstyle{every node}=[circle, draw, fill=black, inner sep=0pt, minimum width=4pt]

  \node[label=left:$x_1$] (x1) at (0,0) {};
  \node[label=below:$x_2$] (x2) at (1,-1.5) {};
  \node[label=right:$x_3$] (x3) at (2,0) {};
  \node[label=above:$x_4$] (x4) at (1,1.5) {};

  \node[label=left:$x_1$] (x7) at (4,0) {};
  \node[label=below:$x_2$] (x8) at (5,-1.5) {};
  \node[label=right:$x_3$] (x9) at (6,0) {};
  \node[label=above:$x_{4}$] (x10) at (5,1.5) {};

  \draw (x1) -- (x2);
  \draw (x2) -- (x3);
  \draw (x3) -- (x4);
  \draw (x4) -- (x1);

  \draw (x7) -- (x8);
  \draw (x8) -- (x9);
  \draw (x9) -- (x10);
  \draw (x10) -- (x7);
  \draw (x7) -- (x9); 
   
\end{tikzpicture}
\caption{A rhombus and a rhombus with an extra edge} 
\label{twoquads}
\end{figure}






\vskip.125in 

\subsection{Results with a general kernel $K$}

Our first result says that for an arbitrary kernel $K$, the $L^p$-improving property of $T_K$ implies an $L^p$-improving property of $\Lambda^K_G$ if $G$ is a tree (see Figure \ref{trees} below). 

\begin{theorem} \label{maintree} (Trees) Suppose that $T_K$ is universally $L^p$-improving, and $G$ is a connected tree graph. Then $\Lambda^K_G$ is $L^p$-improving. \end{theorem}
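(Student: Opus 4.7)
The plan is to induct on $n$, peeling off one leaf at a time and using the universal $L^p$-improving hypothesis to produce strict slack at each reduction.

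For the base case $n=2$, the tree is a single edge, so $\Lambda^K_G(f_1,f_2)=\int f_1(x)\,T_Kf_2(x)\,dx$. After Hölder with any dual exponent $p_1'\in(1,\infty)$, I would use universal improving to find $p_2<p_1'$ with $T_K:L^{p_2}\to L^{p_1'}$, yielding $\tfrac{1}{p_1}+\tfrac{1}{p_2}>\tfrac{1}{p_1}+\tfrac{1}{p_1'}=1$.

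For the inductive step I would fix a leaf $v$ with unique neighbor $u$, and (assuming WLOG $f_i\ge 0$) use Fubini to integrate $x^v$ first; the factor $\int K(x^u,x^v)f_v(x^v)\,dx^v$ becomes $(T_Kf_v)(x^u)$ (or symmetrically $(T_K^*f_v)(x^u)$, with the adjoint inheriting the universal improving property by duality). The resulting integral equals $\Lambda^K_{G'}$ on the pruned tree $G'=G\setminus\{v\}$ applied to the same functions except that $f_u$ is replaced by $g_u:=f_u\cdot T_Kf_v$. Applying the inductive hypothesis to $G'$ furnishes exponents $\{s_i\}_{i\ne v}$ with $\sum_{i\ne v}\tfrac{1}{s_i}>1$, and it remains only to bound $\|g_u\|_{L^{s_u}}$. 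I would do this via Hölder: $\|g_u\|_{L^{s_u}}\le\|f_u\|_{L^{p_u}}\|T_Kf_v\|_{L^{t}}$ with $\tfrac{1}{p_u}+\tfrac{1}{t}=\tfrac{1}{s_u}$; choosing any $t>s_u$ (so that $p_u\in[1,\infty]$ is admissible, with $p_u\ge 1$ automatic from $s_u\ge 1$) and invoking universal improving to produce $p_v<t$ with $T_K:L^{p_v}\to L^{t}$, I obtain the exponents $p_i:=s_i$ for $i\ne u,v$ together with $p_u,p_v$ as above. Summing reciprocals,
$$\sum_{i=1}^n \frac{1}{p_i} \;=\; \sum_{i\ne v}\frac{1}{s_i}\;+\;\Bigl(\frac{1}{p_v}-\frac{1}{t}\Bigr)\;>\;1,$$
since the first summand exceeds $1$ by induction and the second is strictly positive because $p_v<t$. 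This closes the induction.

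The argument is essentially Hölder and Fubini iterated along the tree, so there is no deep analytic obstacle. The point requiring the most care is the exponent bookkeeping: at every stage I need each $p_i\in[1,\infty]$ and the running sum $\sum 1/p_i$ to remain strictly greater than $1$. Both are handled by the flexibility built into the universal improving hypothesis, which lets me take the auxiliary exponent $t$ as large as I please in $(1,\infty)$; the strict inequality $p_v<t$ coming from universal improving is precisely what converts the equality $\sum_{i\ne v}1/s_i+1/p_u+1/p_v=\sum_{i\ne v}1/s_i+1/s_u+(1/p_v-1/t)$ into a strict inequality after the leaf reduction.
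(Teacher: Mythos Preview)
Your argument is correct and follows the same basic strategy as the paper --- induction on the tree using H\"older together with the universal improving hypothesis --- but with a different inductive structure. The paper strengthens the induction hypothesis to the norm estimate $\|U\cdot f_1\|_{L^q}\le C\prod_i\|f_i\|_{L^{p_i}}$ with $\sum_i 1/p_i>1/q$ for \emph{every} $q\ge 1$ (where $U$ is the form with the root variable frozen), and then decomposes the tree at the root into all of its subtrees at once; the stronger statement is what allows the subtree estimates to be reinserted. You instead prune a single leaf per step and carry only the form estimate as your hypothesis. This is a bit more economical for the present theorem, whereas the paper's stronger norm bound is what gets reused later in the proof of Theorem~\ref{treeremoval}.

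Two small caveats. First, your parenthetical that $T_K^*$ ``inherits the universal improving property by duality'' is not literally correct: duality converts ``$\forall q\ \exists p<q$ with $T_K:L^p\to L^q$'' into ``$\forall p\ \exists q>p$ with $T_K^*:L^p\to L^q$,'' which is a different quantifier pattern. Your argument still goes through in the $T_K^*$ case if, instead of fixing $t>s_u$ first, you pick any $p_v\in(s_u,\infty)$ and then use the dual property to obtain $t>p_v>s_u$; the rest is unchanged. Second, the H\"older split $1/p_u+1/t=1/s_u$ with $t<\infty$ requires $s_u<\infty$; your construction in fact always produces exponents in $(1,\infty)$, so this is harmless once you fold ``all $p_i\in(1,\infty)$'' into the induction hypothesis.
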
 

\vskip.125in 

\begin{figure}[H]
\begin{tikzpicture}[node distance=2cm]

  \tikzstyle{every node}=[circle, draw, fill=black, inner sep=0pt, minimum width=4pt]

  \node[label=left:$x_1$] (x1) at (0,0) {};
  \node[label=left:$x_2$] (x2) at (-0.5,-1) {};
  \node[label=right:$x_3$] (x3) at (0.5,-1) {};
  \node[label=right:$x_4$] (x4) at (0.5,-2) {};
  \node[label=left:$x_5$] (x5) at (0,-3) {};
  \node[label=right:$x_6$] (x6) at (1,-3) {};

  \node[label=left:$x_1$] (x7) at (2,0) {};
  \node[label=right:$x_2$] (x8) at (2.5,-1) {};
  \node[label=right:$x_3$] (x9) at (3,-2) {};
  \node[label=right:$x_{4}$] (x10) at (3.5,-3) {};

  \draw (x1) -- (x2);
  \draw (x1) -- (x3);
  \draw (x3) -- (x4);
  \draw (x4) -- (x5);
  \draw (x4) -- (x6);

  \draw (x7) -- (x8);
  \draw (x8) -- (x9);
  \draw (x9) -- (x10);

\end{tikzpicture}
\caption{Tree graphs} 
\label{trees}
\end{figure}
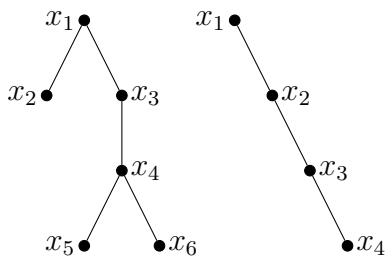 

\vskip.125in 

Our second result says that if we trim trees from the graph $G'$, obtaining a graph $G$, and $\Lambda_G^K$ is $L^p$-improving, then $\Lambda_{G'}^K$ is $L^p$-improving. See Figure \ref{treeandtriangle} below. To state this precisely, we need another definition. 

\begin{definition} \label{deforestation} Let $G$ be a connected proper sub-graph of a graph $G'$. We say that $G$ is a contraction of $G'$ if every vertex of $G$ is a root of a (possibly empty) tree with the remaining vertices (if any) in the complement of $G$ in $G'$, such that the trees with roots at the vertices of $G$ are disjoint. \end{definition} 

\begin{theorem} \label{treeremoval} (Tree removal) Suppose that a connected graph $G$ is a contraction of a connected graph $G'$. Then $\Lambda_{G'}^K$ is $L^p$-improving if $\Lambda_G^K$ is $L^p$-improving. 
\end{theorem}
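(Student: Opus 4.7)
The plan is to use the edge partition $E(G') = E(G) \sqcup \bigsqcup_{v \in G} E(T_v)$, where $T_v$ denotes the tree attached at vertex $v$, to rewrite $\Lambda_{G'}^K$ as an instance of $\Lambda_G^K$ applied to modified functions, and then to bound those modified functions by a leaf-by-leaf induction on each tree $T_v$, employing universal $L^p$-improving of $T_K$ at each leaf. By Fubini,
\begin{equation*}
\Lambda_{G'}^K(\{f_u\}_{u \in G'}) = \Lambda_G^K(f_1 F_1, \dots, f_n F_n), \qquad F_v(x^v) := \int \prod_{ij \in E(T_v)} K(x^i, x^j) \prod_{u \in T_v \setminus \{v\}} f_u(x^u) \, dx^u,
\end{equation*}
with $F_v \equiv 1$ if $T_v = \{v\}$. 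Invoking the $L^p$-improving hypothesis on $\Lambda_G^K$ and H\"older's inequality yields
\begin{equation*}
|\Lambda_{G'}^K| \leq C \prod_{v=1}^n \|f_v F_v\|_{L^{p_v}} \leq C \prod_{v=1}^n \|f_v\|_{L^{a_v}} \|F_v\|_{L^{b_v}}
\end{equation*}
for some $p_1, \dots, p_n$ with $\sum_v 1/p_v > 1$ and any H\"older-conjugate pair $1/a_v + 1/b_v = 1/p_v$ (with $b_v = \infty$ whenever $T_v$ is trivial).

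The core of the argument is a rooted-tree estimate: for each nontrivial $T_v$ and each $b_v \in (1, \infty)$, there exist exponents $\{q_u\}_{u \in T_v \setminus \{v\}} \subset (1, \infty)$ with $\sum_u 1/q_u > 1/b_v$ such that $\|F_v\|_{L^{b_v}} \leq C \prod_u \|f_u\|_{L^{q_u}}$. I would prove this by induction on $|T_v|$: pick a leaf $\ell$ of $T_v$ distinct from the root, with (unique) parent $p$; integrating out $x^\ell$ produces the factor $(T_K f_\ell)(x^p)$, and universal $L^p$-improving of $T_K$ gives $\|T_K f_\ell\|_{L^t} \leq C \|f_\ell\|_{L^s}$ for some $s < t$; then H\"older absorbs $T_K f_\ell$ into the factor at vertex $p$, and the inductive hypothesis applied to the pruned tree $T_v \setminus \{\ell\}$ closes the step. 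Each leaf integration strictly increases the sum of reciprocal exponents by $1/s - 1/t > 0$, so after processing all leaves of $T_v$ the strict inequality $\sum_u 1/q_u > 1/b_v$ is preserved. This is exactly the mechanism underlying the proof of Theorem \ref{maintree}, adapted to the situation where the root variable $x^v$ is not integrated.

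Combining the two pieces gives $|\Lambda_{G'}^K| \leq C \prod_{u \in G'} \|f_u\|_{L^{\tilde p_u}}$ with
\begin{equation*}
\sum_{u \in G'} \frac{1}{\tilde p_u} \;>\; \sum_{v \in G} \frac{1}{a_v} + \sum_{v \in G} \frac{1}{b_v} \;=\; \sum_{v \in G} \frac{1}{p_v} \;>\; 1,
\end{equation*}
which is the required $L^p$-improving estimate for $\Lambda_{G'}^K$. I expect the main technical obstacle to be the coherent choice of exponents throughout: the H\"older parameters $a_v, b_v$ must be picked so that the tree-induction parameters $q_u$ stay inside $(1,\infty)$, and the strict gains from each $L^p$-improving application must survive every H\"older recombination. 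The universal $L^p$-improving hypothesis on $T_K$ supplies the flexibility in choosing source-target pairs, but the choices must be made systematically from the leaves of each $T_v$ inward, absorbing a small loss of integrability at each step so that no exponent is driven to an endpoint.
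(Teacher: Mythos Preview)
Your proposal is correct and follows essentially the same approach as the paper. The paper's proof writes $\Lambda_{G'}^K$ as $\Lambda_G^K(F_1,\dots,F_n)$ (with the root function absorbed into $F_j$), applies the $L^p$-improving hypothesis on $\Lambda_G^K$, and then invokes the strengthened inductive statement established inside the proof of Theorem~\ref{maintree} (namely $\|F_j\|_{L^{r_j}} \le C\prod_i \|f_i\|_{L^{p_{ji}}}$ with $\sum_i 1/p_{ji} > 1/r_j$); your separation of $f_v$ from $F_v$ via H\"older followed by leaf-by-leaf pruning is exactly that strengthened statement unpacked, so the two arguments coincide up to cosmetic bookkeeping.
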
 

\vskip.125in 

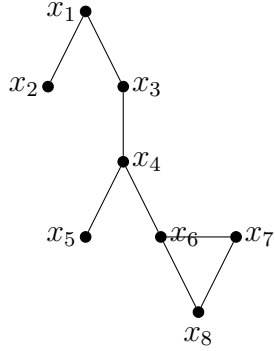
\begin{figure}[H]
\begin{tikzpicture}[node distance=2cm]

  \tikzstyle{every node}=[circle, draw, fill=black, inner sep=0pt, minimum width=4pt]

  \node[label=left:$x_1$] (x1) at (0,0) {};
  \node[label=left:$x_2$] (x2) at (-0.5,-1) {};
  \node[label=right:$x_3$] (x3) at (0.5,-1) {};
  \node[label=right:$x_4$] (x4) at (0.5,-2) {};
  \node[label=left:$x_5$] (x5) at (0,-3) {};
  \node[label=right:$x_6$] (x6) at (1,-3) {};

  \draw (x1) -- (x2);
  \draw (x1) -- (x3);
  \draw (x3) -- (x4);
  \draw (x4) -- (x5);
  \draw (x4) -- (x6);

  \node[label=right:$x_7$] (x7) at (2,-3) {};
  \node[label=below:$x_8$] (x8) at (1.5,-4) {};

  \draw (x6) -- (x7);
  \draw (x7) -- (x8);
  \draw (x8) -- (x6);

\end{tikzpicture}
\caption{Triangle with a tree added on} 
\label{treeandtriangle}
\end{figure}

Our next result allows us to string together graphs that share a single vertex. See Figure \ref{twographs} below. We start with a definition.

\vskip.125in 

\begin{definition}\label{FormUniversalLpImproving}
Let $G$ be a connected graph on $n$ vertices and $K$ a locally integrable kernel. 
\begin{itemize}
\item[(i)] We say the form $\Lambda_{G}^K$ has a non-trivial estimate at a vertex $i$ if $\Lambda_{G}^K$ is bounded by
$$ \Lambda_{G}^{K}(f_1,\ldots,f_{i},\ldots,f_n) \leq C \|f_1\|_{p_1} \ldots \|f_i\|_{p_i}\ldots\|f_n\|_{p_n} $$
with $\frac{1}{p_1}+\ldots+\frac{1}{p_i}+\ldots+\frac{1}{p_n}\geq 1$ where $1\leq p_i<\infty$ while $1\leq p_j \leq \infty$ if $j\neq i$.
\vskip.125in 
\item[(ii)] We say that the form $\Lambda_G^K$ is universally $L^p$ improving if each of the $(n-1)$-linear operators $T_{i}$ that arise from
$$\Lambda_G^K(f_1,\ldots,f_n) = \int T_{i}(f_1,\ldots,f_{i-1},f_{i+1},\ldots,f_n)(x_i) f_{i}(x_i) dx_i ,$$
$1 \leq i \leq n$, are universally $L^p$ improving for each $1<r<\infty$.
\end{itemize}
 
\end{definition}

\begin{theorem} \label{joininggraphs} Let $G_1, G_2$ be connected graphs, with $n_1$ and $n_2$ vertices, respectively. Suppose that $G_1$ and $G_2$ have a single vertex in common and let $G=G_1 \cup G_2$ in the sense that we take the union of the vertex sets and the edge relations remain the same as before. Let $K$ be a locally integrable kernel such that at least one of
$$\Lambda_{G_1}^K(f_1, \dots, f_{n_1}), \Lambda_{G_2}^K(f_{n_1+1}, \dots, f_{n_1+n_2})$$ is universally $L^p$-improving and the other satisfies a non-trivial estimate at the common vertex. Then $\Lambda_G^K(f_1, \dots, f_{n_1+n_2-1})$ is $L^p$-improving. \end{theorem}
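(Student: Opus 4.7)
The plan is to exploit the fact that gluing $G_1$ and $G_2$ at a single common vertex $v$ decouples the multi-linear form into a product. Since $V(G_1)\cap V(G_2)=\{v\}$ there is no edge of $G$ joining $V(G_1)\setminus\{v\}$ to $V(G_2)\setminus\{v\}$, and therefore $E(G)=E(G_1)\sqcup E(G_2)$. Pulling $f_v(x^v)$ outside the remaining integrations, $\Lambda_G^K$ factors as
$$\Lambda_G^K(f_1,\dots,f_{n_1+n_2-1})=\int_{\R^d} T_v^{G_1}(\,\cdot\,)(x^v)\; T_v^{G_2}(\,\cdot\,)(x^v)\; f_v(x^v)\,dx^v,$$
where $T_v^{G_i}$ is the $(n_i-1)$-linear operator obtained from $\Lambda_{G_i}^K$ by integrating out every variable except the one at $v$, exactly as in Definition \ref{FormUniversalLpImproving}(ii). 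Fubini is justified after reducing to $f_i\geq 0$, and in the measure case by running the entire argument on the regularizations $K^\epsilon$ with $\epsilon\to 0$ at the end.

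Without loss of generality suppose $\Lambda_{G_1}^K$ is universally $L^p$-improving and $\Lambda_{G_2}^K$ satisfies a non-trivial estimate at $v$ with exponents $(q_j)_{j\in V(G_2)}$, $q_v<\infty$, $\sum_{j\in V(G_2)} 1/q_j\geq 1$. Dualizing the $f_v$ slot of that non-trivial estimate against $L^{q_v}$ rewrites it as
$$\|T_v^{G_2}(\,\cdot\,)\|_{L^{q_v'}(\R^d)}\leq C\prod_{j\in V(G_2)\setminus\{v\}}\|f_j\|_{L^{q_j}(\R^d)},\qquad \sum_{j\neq v}\frac{1}{q_j}\geq \frac{1}{q_v'}.$$
On the $G_1$ side, universality lets me pick any $r_1>q_v$ and obtain exponents $(p_i)_{i\in V(G_1)\setminus\{v\}}$ with $\sum_{i\neq v}1/p_i>1/r_1$ and $\|T_v^{G_1}(\,\cdot\,)\|_{L^{r_1}(\R^d)}\leq C\prod_{i\neq v}\|f_i\|_{L^{p_i}(\R^d)}$. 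Setting $1/p_v=1-1/r_1-1/q_v'=1/q_v-1/r_1\in(0,1]$ (legitimate precisely because $r_1>q_v$), H\"older with exponents $(r_1,q_v',p_v)$ applied to the $x^v$-integral yields
$$\Lambda_G^K(f_1,\dots,f_{n_1+n_2-1})\leq C\,\|f_v\|_{L^{p_v}}\prod_{i\in V(G_1)\setminus\{v\}}\|f_i\|_{L^{p_i}}\prod_{j\in V(G_2)\setminus\{v\}}\|f_j\|_{L^{q_j}}.$$

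Summing the reciprocal exponents,
$$\sum_{i\in V(G_1)\setminus\{v\}}\frac{1}{p_i}+\sum_{j\in V(G_2)\setminus\{v\}}\frac{1}{q_j}+\frac{1}{p_v}>\frac{1}{r_1}+\frac{1}{q_v'}+\frac{1}{p_v}=1,$$
with strict inequality furnished by the $G_1$ factor, which is exactly the bound required by Definition \ref{GformLpdef}. The main obstacle, and indeed the whole reason the hypothesis must include the word "universally", is the freedom to take $r_1>q_v$ on the $G_1$ side: if $\Lambda_{G_1}^K$ were $L^p$-improving only at some fixed scale, one might not be able to accommodate the $q_v$ emerging from the $G_2$ side and the three-term H\"older would collapse. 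Everything else is routine bookkeeping: each exponent automatically lies in $[1,\infty]$ since $1\leq q_v<\infty$ and $r_1>q_v$, and in the measure case one checks that the H\"older constants can be chosen uniformly in the regularization parameter $\epsilon$.
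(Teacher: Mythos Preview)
Your proof is correct and follows essentially the same approach as the paper: factor $\Lambda_G^K$ through the common vertex, use the non-trivial estimate on one piece and universal $L^p$-improving on the other, and glue with H\"older. The only cosmetic difference is that the paper first applies the non-trivial estimate treating $T_v^{G_2}(\cdot)f_v$ as a single test function at $v$ and then splits that product by H\"older, whereas you dualize the non-trivial estimate to an operator bound on $T_v^{G_2}$ and run a three-term H\"older directly; the arithmetic and the source of the strict inequality are identical.
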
 

\vskip.125in 

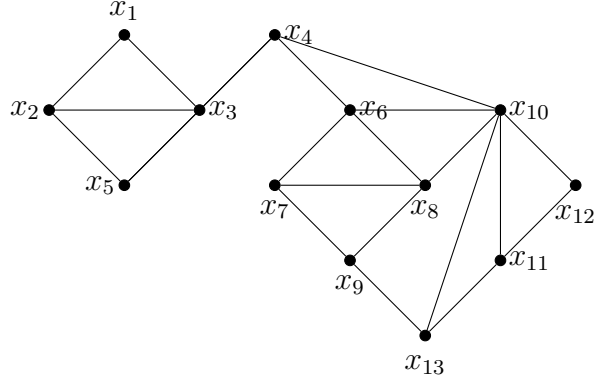
\begin{figure}[H]
\begin{tikzpicture}[node distance=2cm]
  \tikzstyle{every node}=[circle, draw, fill=black, inner sep=0pt, minimum width=4pt]

  \node[label=above:$x_1$] (x1) at (0,0) {};
  \node[label=left:$x_2$] (x2) at (-1,-1) {};
  \node[label=right:$x_3$] (x3) at (1,-1) {};
  \node[label=right:$x_4$] (x4) at (2,0) {}; 
  \node[label=left:$x_5$] (x5) at (0,-2) {};

  \draw (x1) -- (x2);
  \draw (x1) -- (x3);
  \draw (x2) -- (x5);
  \draw (x2) -- (x3);
  \draw (x3) -- (x4);
  \draw (x3) -- (x5);
  \draw (x5) -- (x4);

  \node[label=right:$x_6$] (x6) at (3,-1) {};
  \node[label=below:$x_7$] (x7) at (2,-2) {};
  \node[label=below:$x_8$] (x8) at (4,-2) {};
  \node[label=below:$x_9$] (x9) at (3,-3) {};
  \node[label=right:$x_{10}$] (x10) at (5,-1) {};
  \node[label=right:$x_{11}$] (x11) at (5,-3) {};
  \node[label=below:$x_{12}$] (x12) at (6,-2) {};
  \node[label=below:$x_{13}$] (x13) at (4,-4) {};

  \draw (x4) -- (x6);
  \draw (x6) -- (x7);
  \draw (x6) -- (x8);
  \draw (x7) -- (x9);
  \draw (x8) -- (x9);
  \draw (x7) -- (x8);
  \draw (x10) -- (x11);
  \draw (x10) -- (x12);
  \draw (x11) -- (x12);
  \draw (x13) -- (x11);
  \draw (x13) -- (x10);
  \draw (x9) -- (x13);
  \draw (x8) -- (x10);
  \draw (x6) -- (x10);
  \draw (x4) -- (x10);

\end{tikzpicture}
\caption{Two connected graphs with a common vertex $x_4$} 
\label{twographs}
\end{figure}

This can be easily extended to a broader class of graphs with more graphs being connected.

\begin{definition}
Let $G$ be a connected graph with sub-graphs $G_1,\ldots,G_k$. We say $G$ is a tree graph formed by $G_1,\ldots,G_k$ if for each $1\leq i < j \leq k$ we have that $G_i$ and $G_j$ share at most one vertex and the graph formed by assigning a vertex $v_i$ to each $G_i$ and drawing an edge between $v_i$ and $v_j$ if $G_i$ and $G_j$ share exactly one vertex forms a graph that is a tree.
\end{definition}

An induction argument using Theorem \ref{joininggraphs} repeatedly yields the following Corollary.

\begin{corollary}
Suppose that a connected graph $G$ is a tree graph formed by $G_1,\ldots,G_k$ and $K$ is a locally integrable kernel. Then $\Lambda_G^K$ is $L^p$ improving if at least one of the $\Lambda_{G_{i}}^K$ is universally $L^p$-improving and the remaining ones satisfy non-trivial estimates at all vertices that connect to another graph.
\end{corollary}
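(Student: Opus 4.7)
The plan is to prove the corollary by induction on $k$, attaching one subgraph at a time to a growing union via Theorem~\ref{joininggraphs}. First I would root the meta-tree at the distinguished index $i_0$ for which $\Lambda_{G_{i_0}}^K$ is universally $L^p$-improving, and enumerate its vertices in a breadth-first traversal as $i_0 = \sigma(1), \sigma(2), \dots, \sigma(k)$. Setting $H_\ell = G_{\sigma(1)} \cup \dots \cup G_{\sigma(\ell)}$, this ordering guarantees that each $G_{\sigma(\ell)}$ intersects $H_{\ell-1}$ in exactly one vertex $v_\ell$, namely the vertex it shares with its parent in the rooted meta-tree; any additional shared vertex with some $G_{\sigma(m)}$, $m<\ell-1$, would create a cycle in the meta-tree, contradicting the hypothesis that the meta-tree is a tree.

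The inductive hypothesis I would carry is that $\Lambda_{H_\ell}^K$ is universally $L^p$-improving. The base case $H_1=G_{i_0}$ is the standing assumption of the corollary. For the inductive step, $H_\ell = H_{\ell-1}\cup G_{\sigma(\ell)}$ share only $v_\ell$, $H_{\ell-1}$ is universally $L^p$-improving by inductive hypothesis, and $G_{\sigma(\ell)}$ (one of the ``remaining'' subgraphs since $\sigma(\ell)\neq i_0$ for $\ell>1$) satisfies a non-trivial estimate at the connecting vertex $v_\ell$ by the hypothesis of the corollary. Theorem~\ref{joininggraphs} then applies and yields that $\Lambda_{H_\ell}^K$ is $L^p$-improving; taking $\ell=k$ delivers the desired conclusion that $\Lambda_G^K = \Lambda_{H_k}^K$ is $L^p$-improving.

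The main obstacle is that Theorem~\ref{joininggraphs} as literally stated outputs only ``$L^p$-improving'' rather than the stronger ``universally $L^p$-improving'' needed to continue the induction, so a naive iteration stalls after a single merge. To close the induction I would extract from the proof of Theorem~\ref{joininggraphs} the sharper assertion that the joined form is universally $L^p$-improving whenever one of the two pieces is. The natural mechanism, consistent with Definition~\ref{FormUniversalLpImproving}, is that one uses the non-trivial estimate on the non-universal side to collapse its variables into a function of $x_{v_\ell}$ in an appropriate Lebesgue space at $v_\ell$, after which the freedom to choose any target exponent $r\in(1,\infty)$ on the universal side, combined with H\"older at $v_\ell$, allows one to realize any desired target exponent for the joined form at every vertex. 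With this upgrade to Theorem~\ref{joininggraphs} in hand the induction closes, and one in fact obtains that $\Lambda_G^K$ is universally $L^p$-improving, from which the stated corollary follows.
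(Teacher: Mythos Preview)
Your approach---root the meta-tree at the universally $L^p$-improving subgraph and attach the remaining $G_{\sigma(\ell)}$ one at a time via Theorem~\ref{joininggraphs}---is exactly what the paper intends; the paper says only that ``an induction argument using Theorem~\ref{joininggraphs} repeatedly yields the following Corollary'' and gives no further detail. You in fact go further than the paper by flagging the obstacle (that Theorem~\ref{joininggraphs} outputs only $L^p$-improving, not universally $L^p$-improving) and by sketching the natural strengthening of the inductive hypothesis needed to close the iteration.
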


\vskip.125in 

\subsection{Distance graph results}

Our next result says that if $K=\sigma$, the arc-length measure on the unit circle $S^1$, then $\Lambda_G^K$ is $L^p$-improving provided that the underlying multi-linear form is, in a suitable sense, well-defined. We shall need the following definition which generalizes the notion of a locally infinitesimally rigid configuration in \cite{CIMP21}. We shall state a more general version of this definition in Section \ref{future}. 

\begin{definition}\label{def: regularly realizable} Let $G$ be a graph with vertices $\{1,\ldots, n\}$ and edge set $E$. Let $F : \mathbb R^{2n} \to \mathbb R^{|E|}$ be the map
$$
    F(x_1, \ldots, x_n) = (|x_i - x_j|)_{ij} \qquad \text{ for } \{i,j\} \in E, \ i < j,
$$
and set $M = F^{-1}(\mathbf 1)$ where $\mathbf 1 \in \mathbb R^{|E|}$ is the vector with $1$ in every entry. We say that $G$ is \emph{regularly realizable} in $\mathbb R^2$ if $M$ is nonempty and $\mathbf 1$ is a regular value of $F$.
\end{definition}

We discuss some examples of graphs that are regularly realizable and graphs that are not regularly realizable in Section \ref{sec: reg realizable}.

\begin{remark} An interesting question we were not able to resolve is whether Definition \ref{def: regularly realizable} is equivalent to the statement that the corresponding multi-linear kernel (\ref{localintegrability}) is locally integrable. We hope to address this issue in the sequel. For example, the Leray measure for the 6-cycle is singular, but it is integrable. \end{remark} 


\vskip.125in 

\begin{theorem}\label{allgraphsgood}
    Let $G$ be a connected graph on $n \geq 2$ vertices which is regularly realizable in $\mathbb R^2$. Then, the multilinear form $\Lambda_G$ is bounded on $L^{p_1}(\mathbb R^2) \times \cdots \times L^{p_n}(\mathbb R^2)$ for all $(\frac{1}{p_1}, \ldots, \frac{1}{p_n})$ contained in the convex hull of the points
    $$
        \{e_1, \ldots, e_n\} \cup \left\{\frac 2 3 e_i + \frac 2 3 e_j \right\}_{\{i,j\} \in E}.
    $$
    In particular, $\Lambda_G$ is uniformly $L^p$-improving.
\end{theorem}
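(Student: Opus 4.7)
My strategy is to establish the multilinear inequality at the extreme points of the convex hull and then invoke multilinear interpolation. The extreme points come in two families: the trivial vertices $e_i$, corresponding to $p_i=1$ and $p_k=\infty$ for $k\neq i$, and the ``edge'' vertices $\tfrac{2}{3}(e_i+e_j)$ for $\{i,j\}\in E$, corresponding to $p_i=p_j=\tfrac{3}{2}$ and $p_k=\infty$ for $k\notin\{i,j\}$.

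At a trivial vertex $e_i$, I would pull out the $L^\infty$ factors and use translation invariance of $\sigma$ to see that the remaining integral over the other vertices is independent of $x^i$. Connectedness of $G$ (traverse a spanning tree of unit-length edges starting at $x^i$) makes the support, viewed modulo a simultaneous translation, bounded and hence compact; regular realizability makes the Leray measure there a smooth surface measure on a smooth manifold, hence finite. The resulting constant $C_G<\infty$ yields $|\Lambda_G|\leq C_G\|f_i\|_1\prod_{k\neq i}\|f_k\|_\infty$.

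At an edge vertex $\tfrac{2}{3}(e_i+e_j)$, I would isolate the chosen edge and write
$$
\Lambda_G(f_1,\ldots,f_n)=\int\int f_i(x^i)\,f_j(x^j)\,\sigma(x^i-x^j)\,R(x^i,x^j)\,dx^i\,dx^j,
$$
where $R(x^i,x^j)$ is the integral over the remaining vertices against the other $\sigma$-factors and $L^\infty$ factors. The crucial claim is the uniform estimate $\sup_{|x^i-x^j|=1}|R(x^i,x^j)|\leq C_G'\prod_{k\neq i,j}\|f_k\|_\infty$. Given this bound, the classical $L^{3/2}(\mathbb R^2)\to L^3(\mathbb R^2)$ $L^p$-improving estimate for convolution with the arc-length measure $\sigma$ on $S^1$ yields
$$
\Big|\int\int f_i(x^i)\,f_j(x^j)\,\sigma(x^i-x^j)\,dx^i\,dx^j\Big| \leq C\|f_i\|_{3/2}\|f_j\|_{3/2},
$$
closing the edge-vertex bound. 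If $(i,j)$ is a bridge, $R$ factors as a product of two translation-invariant constants and the bound is immediate. Otherwise, I would prove the uniform estimate by disintegrating the Leray measure on $M$ along the projection $(x^1,\ldots,x^n)\mapsto(x^i,x^j)$: regular realizability makes this projection a submersion from $M$ onto the unit-distance locus $\{|x^i-x^j|=1\}\subset\mathbb R^4$, and connectedness makes its fibers compact, so the push-forward density is bounded.

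Multilinear complex interpolation then extends the inequality to every point of the convex hull, and the $L^p$-improving conclusion follows by inspection at any edge vertex, where $\tfrac{1}{p_i}+\tfrac{1}{p_j}=\tfrac{4}{3}>1$. The main obstacle is the uniform bound on $R$ in the non-bridge case: this is where regular realizability is used in full strength, and the disintegration/submersion step requires care to convert the pointwise transversality condition into a uniform $L^\infty$ bound on the pushforward density.
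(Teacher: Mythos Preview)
Your approach is essentially the paper's: put the factors away from a chosen edge in $L^\infty$, reduce to $\langle f_i,\sigma*f_j\rangle$, and apply Strichartz. The paper does not treat the $e_i$ vertices as a separate case, since $(1,0)$ and $(0,1)$ already lie in the Strichartz triangle attached to any edge containing $i$.

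Where you leave a gap is precisely the step you flagged: the uniform bound on $R(x^i,x^j)$. Your assertion that ``regular realizability makes this projection a submersion'' states the right conclusion but not the right reason. The rank condition on $dF$ makes $M$ a smooth manifold, but it does not by itself force $d\pi|_{T_pM}$ to surject onto the three-dimensional tangent space of $\{|x^i-x^j|=1\}$; and even granting the submersion, a smooth pushforward density on that non-compact base is only locally bounded. The paper supplies the missing mechanism: the rigid motion group $\mathcal R(\mathbb R^2)$ acts on $M$ (since $F$ is invariant under simultaneous rigid motions) and acts freely and transitively on ordered pairs at unit distance, so $(\psi,x')\mapsto\psi(x')$ is a diffeomorphism $\mathcal R(\mathbb R^2)\times M_0\to M$, where $M_0=\{x\in M:x^i=0,\ x^j=e_1\}$ is compact by connectedness of $G$. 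The Leray measure then factors as Haar measure on $\mathcal R(\mathbb R^2)$ times a finite smooth measure on $M_0$, so your $R$ is the constant $\int_{M_0}dx'$ and the remaining integral over $\mathcal R(\mathbb R^2)$ is literally $\langle f_i,\sigma*f_j\rangle$. This group-action decomposition is the single idea that simultaneously delivers the submersion property, the compactness of fibers, and the global uniform bound you were after.
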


\begin{remark} With a bit of work, Theorem \ref{allgraphsgood} can be extended to higher dimensions. A much more general formulation should be possible as well. We shall address this issue in the sequel. See Section \ref{future} for more details. \end{remark} 


\vskip.125in 

We also conducted a couple of case studies where, at least in some cases, sharp exponents can be obtained for the problems described above. 

\vskip.125in 

\begin{theorem} \label{thm:triangle}
Let $G$ denote a complete graph on three vertices.
The trilinear form $\Lambda_G$ is bounded by $L^{p_1}(\mathbb R^2) \times L^{p_2}(\mathbb R^2) \times L^{p_3}(\mathbb R^2)$ for all $(\frac{1}{p_1}, \frac{1}{p_2}, \frac{1}{p_3})$ contained in the polygon with the vertices:  
$$(1,0,0), \  (0,1,0), \ (0, 0, 1), \ \left(\frac{2}{3}, \frac{2}{3}, 0 \right), \ \left(\frac{2}{3}, 0, \frac{2}{3} \right), \ \left(0, \frac{2}{3}, \frac{2}{3}\right), \ \left(\frac{1}{2}, \frac{1}{2}, \frac{1}{2}\right).$$

The exponents are optimal modulo the endpoint $ \ \left(\frac{1}{2}, \frac{1}{2}, \frac{1}{2}\right)$ where a restricted strong type estimate holds.
\end{theorem}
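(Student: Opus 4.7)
My plan is to verify the trilinear bound at each of the seven listed vertices, and then appeal to multilinear Marcinkiewicz interpolation to fill in the polygon. The three unit vectors $(1,0,0), (0,1,0), (0,0,1)$ are immediate from H\"older: since $\sigma$ has compact support, two $L^\infty$ factors control the three copies of the kernel and the remaining $L^1$ factor absorbs the integral. The three edge midpoints $\frac{2}{3}(e_i + e_j)$ are precisely the non-trivial vertices produced by Theorem~\ref{allgraphsgood} applied to the 3-cycle, once we verify that this graph is regularly realizable in $\mathbb R^2$: the Jacobian of the edge-length map $F$ at a unit equilateral configuration consists of the three normalized edge vectors paired into the appropriate $x_i$-entries, and these span a $3$-dimensional complement to the $SE(2)$-orbit in $\mathbb R^6$, so $F$ is submersive and $\mathbf 1$ is a regular value.

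The central and genuinely new vertex is $(\tfrac12,\tfrac12,\tfrac12)$, where a restricted strong-type estimate
$$
\Lambda_G(\chi_{A_1}, \chi_{A_2}, \chi_{A_3}) \leq C |A_1|^{1/2}|A_2|^{1/2}|A_3|^{1/2}
$$
is claimed. My approach is to dualize via Cauchy--Schwarz in $x_1$:
$$
\Lambda_G(f_1, f_2, f_3) = \int f_1(x)\,B(f_2, f_3)(x)\,dx \leq \|f_1\|_{L^2}\,\|B(f_2, f_3)\|_{L^2},
$$
where $B$ is the bilinear equilateral-triangle operator
$$
B(f,g)(x) = \sum_{\pm}\int_{S^1} f(x+\omega)\,g(x + R_{\pm\pi/3}\omega)\,d\sigma(\omega),
$$
encoding the fact that for $|x_1 - x_2| = 1$ the two unit-equilateral completions are $x_1 + R_{\pm\pi/3}(x_2 - x_1)$. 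It then suffices to show the bilinear estimate
$$
\|B(\chi_{A_2}, \chi_{A_3})\|_{L^2(\mathbb R^2)}^2 \leq C\,|A_2|\,|A_3|.
$$

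To prove this bilinear bound I would use a $T^*T$ expansion of $\|B\|_{L^2}^2$ together with Fourier-side analysis, in which $B$ becomes a bilinear Fourier multiplier with symbol $\hat\sigma\bigl((I - R_{\mp\pi/3})\eta + R_{\mp\pi/3}\xi\bigr)$. The linear map $I - R_{\mp\pi/3}$ is itself a rotation (since $1 - e^{\mp i\pi/3} = e^{\pm i\pi/3}$ as a complex number), so the change of variables $\zeta = (I - R_{\mp\pi/3})\eta + R_{\mp\pi/3}\xi$ is non-degenerate and decouples $\xi$ and $\eta$. The main obstacle is that $\hat\sigma \notin L^2$, so a naive Cauchy--Schwarz on the multiplier diverges. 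Instead, one must exploit the Bessel decay $\hat\sigma(\zeta) \sim |\zeta|^{-1/2}$ together with the transversality between the two rotated copies of $S^1$; this converts the $T^*T$ expansion into a double integral over $S^1\times S^1$ of $4$-fold intersection measures $|(A_2-\omega)\cap(A_2-\omega')\cap(A_3-R_{\pm\pi/3}\omega)\cap(A_3-R_{\pm\pi/3}\omega')|$, which can be controlled by autocorrelation-style integrals $\int_{S^1}|A_i \cap (A_i + rs)|\,d\sigma(s)$ and, in turn, by Oberlin's $L^{3/2} \to L^3$ bound for the circular average $A_\sigma$. The fact that the resulting estimate naturally scales symmetrically like $(|A_2||A_3|)^{1/2}$ rather than asymmetrically like $\min(|A_2|, |A_3|)$ is precisely what forces the restricted-strong-type rather than full strong-type conclusion at this vertex.

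Once the restricted strong-type endpoint at $(\tfrac12,\tfrac12,\tfrac12)$ and the six strong-type endpoints are in hand, multilinear Marcinkiewicz interpolation delivers boundedness throughout the convex hull (strong type in the interior, since every proper convex combination uses a strong-type vertex). For sharpness of the non-central vertices, I would test $\Lambda_G$ against thin Knapp-type tubes of dimensions $\delta \times 1$ adapted to arcs of $S^1$ and placed near the three vertices of a model equilateral triangle: scale counting on such examples exactly saturates the $L^{3/2} \times L^{3/2} \times L^\infty$ vertices and forbids any further improvement. At $(\tfrac12,\tfrac12,\tfrac12)$, a logarithmically spaced family of shrinking equilateral $\delta$-configurations supplies a logarithmic obstruction to full strong-type while leaving restricted strong type intact.
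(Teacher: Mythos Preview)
Your high-level reduction coincides with the paper's: write
\[
\Lambda_G(f_1,f_2,f_3)=\int f_1(x)\,B_{\pi/3}(f_2,f_3)(x)\,dx+\int f_1(x)\,B_{-\pi/3}(f_2,f_3)(x)\,dx
\]
with $B_\theta(g,h)(x)=\int g(x-y)h(x-\Theta y)\,d\sigma(y)$, and then read off the trilinear vertices from the mapping properties of $B_\theta$. The paper does exactly this and then \emph{cites} \cite{GIKL17} for all endpoint bounds of $B_\theta$, including the restricted strong type $L^2\times L^2\to L^2$. Your treatment of the six outer vertices is fine (and invoking Theorem~\ref{allgraphsgood} for the $\tfrac23(e_i+e_j)$ points is a pleasant alternative to quoting the bilinear $L^{3/2}\times L^\infty\to L^3$ bound).

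The genuine gap is your sketch at $(\tfrac12,\tfrac12,\tfrac12)$. After the $T^*T$ expansion you arrive at
\[
\|B(\chi_{A_2},\chi_{A_3})\|_2^2=\iint_{S^1\times S^1}\bigl|(A_2-\omega)\cap(A_2-\omega')\cap(A_3-R\omega)\cap(A_3-R\omega')\bigr|\,d\sigma(\omega)\,d\sigma(\omega'),
\]
and you propose to ``control by autocorrelation-style integrals $\int_{S^1}|A_i\cap(A_i+rs)|\,d\sigma(s)$'' together with the $L^{3/2}\to L^3$ circular-average bound. But the obvious routes to autocorrelations discard two of the four constraints, and then one only gets
\[
\|B(\chi_{A_2},\chi_{A_3})\|_2^2\le\iint|(A_2-\omega)\cap(A_2-\omega')|\,d\sigma\,d\sigma=\|\chi_{A_2}*\sigma\|_2^2\le C|A_2|,
\]
and by symmetry $\le C|A_3|$; applying Cauchy--Schwarz in $x$ instead gives $\le C(|A_2||A_3|)^{1/2}$ or, via Strichartz, $\le C(|A_2||A_3|)^{2/3}$. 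None of these is $\le C|A_2||A_3|$ when both sets are small, which is exactly the regime where $(\tfrac12,\tfrac12,\tfrac12)$ is extremal. The restricted strong type bound genuinely requires all four constraints simultaneously; the argument in \cite{GIKL17} that the paper invokes does this, and is more delicate than your outline suggests. Your Fourier-side remark that $I-R_{\mp\pi/3}$ is a rotation is correct and relevant, but you yourself note that $\hat\sigma\notin L^2$ obstructs the naive conclusion, and you do not explain how the ``transversality'' fills the gap. Likewise, your sharpness sketch (Knapp tubes for the outer vertices, a logarithmic stack for the center) does not match the paper's counterexamples, which are built from $\delta$-balls and $\delta$-annuli; you would need to check that your tubes actually saturate conditions of the form $\frac{1}{p_i}+\frac{d}{p_j}+\frac{1}{p_k}\le d$ and $\frac{d+1}{p_i}+\frac{d+1}{p_j}+\frac{2d}{p_k}\le 3d-1$.
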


\vskip.125in 

\begin{figure}[H]
\begin{tikzpicture}[node distance=2cm]

  \tikzstyle{every node}=[circle, draw, fill=black, inner sep=0pt, minimum width=4pt]

  \node[label=left:$x_1$] (x1) at (0,0) {};
  \node[label=left:$x_2$] (x2) at (-0.5,-1) {};
  \node[label=right:$x_3$] (x3) at (0.5,-1) {};

  \draw (x1) -- (x2);
  \draw (x1) -- (x3);
  \draw (x2) -- (x3);

\end{tikzpicture}
\caption{Complete graph on three vertices} 
\label{triangle}
\end{figure}
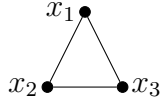 

\vskip.125in 

\begin{theorem} \label{thm:2-chain}
Let $G$ denote a chain on three vertices and let $\Lambda_G$ denote the corresponding trilinear form. More precisely, 
\begin{equation} \label{eq:2-chain}
\Lambda_G(f_1,f_2,f_3) := \int f*\sigma(x) g*\sigma(x) h(x)dx=\int Af(x)Ag(x)h(x)dx,
\end{equation}
where $A$ is the circular averaging operator. 

\vskip.125in 

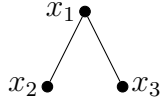
\begin{figure}[H]
\begin{tikzpicture}[node distance=2cm]
\label{hinge}

  \tikzstyle{every node}=[circle, draw, fill=black, inner sep=0pt, minimum width=4pt]

  \node[label=left:$x_1$] (x1) at (0,0) {};
  \node[label=left:$x_2$] (x2) at (-0.5,-1) {};
  \node[label=right:$x_3$] (x3) at (0.5,-1) {};

  \draw (x1) -- (x2);
  \draw (x1) -- (x3);

\end{tikzpicture}
\caption{A path on three vertices} 
\end{figure} 

The trilinear form $\Lambda_G$ is bounded by $L^{p_1}(\mathbb R^2) \times L^{p_2}(\mathbb R^2) \times L^{p_3}(\mathbb R^2)$ for all $(\frac{1}{p_1}, \frac{1}{p_2}, \frac{1}{p_3})$ contained in the polygon with the vertices:  
$$(1,0,0), \  (0,1,0), \ (0, 0, 1),  \ \left(\frac{2}{3}, 0, \frac{2}{3}\right), \ \left(0, \frac{2}{3}, \frac{2}{3}\right), \ \left(1, \frac{1}{2}, 0 \right), \ \left(\frac{1}{2},1,0 \right).$$
\end{theorem}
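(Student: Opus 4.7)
My plan is to verify the bound at each of the seven listed vertices of the polygon and then fill in the interior by multilinear interpolation. The three corner vertices $(1,0,0)$, $(0,1,0)$, $(0,0,1)$ reduce immediately to H\"older's inequality combined with the trivial bound $\|A f\|_p \leq 2\pi \|f\|_p$ valid for every $p \in [1,\infty]$, which follows because $A$ is convolution with a positive measure of total mass $2\pi$. The pair $(2/3,0,2/3)$ and $(0,2/3,2/3)$ follow from the classical sharp $L^p$-improving estimate $A : L^{3/2}(\mathbb{R}^2) \to L^{3}(\mathbb{R}^2)$ for the circular averaging operator, applied in the outer variable; for instance at $(2/3,0,2/3)$, H\"older with exponents $(3,\infty,3/2)$ and the improving bound yield $|\int Af_1 \cdot Af_2 \cdot f_3\,dx| \leq C \|f_1\|_{3/2} \|f_2\|_\infty \|f_3\|_{3/2}$.

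The endpoint pair $(1,1/2,0)$ and $(1/2,1,0)$ is the delicate part of the argument. Pulling out $\|f_3\|_\infty$ and using self-adjointness of $A$, the bound at $(1,1/2,0)$ reduces to the bilinear estimate $\|A f_1 \cdot A f_2\|_1 \leq C \|f_1\|_1 \|f_2\|_2$, which I would rewrite as $\int f_1 \,(K \ast f_2)\,dx$ with $K = \sigma \ast \sigma$. An explicit parametrization of $\sigma \ast \sigma$ shows that $K$ behaves like $|y|^{-1}$ near the origin and like $(4-|y|^2)^{-1/2}$ near $|y|=2$, placing $K$ in the Lorentz space $L^{2,\infty}(\mathbb{R}^2)$ but just outside $L^2(\mathbb{R}^2)$. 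I would then invoke the Lorentz--Young inequality $\|K \ast g\|_\infty \lesssim \|K\|_{L^{2,\infty}} \|g\|_{L^{2,1}}$ to obtain the restricted strong-type bound $|\Lambda(\chi_{E_1},\chi_{E_2},\chi_{E_3})| \lesssim |E_1|\,|E_2|^{1/2}$ at this endpoint, and symmetrically at $(1/2,1,0)$. With all seven extreme-point bounds in hand, multilinear real and complex interpolation promotes the restricted endpoint estimates to strong-type estimates throughout the open interior of the polygon.

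The hardest step is the endpoint analysis at $(1,1/2,0)$ and $(1/2,1,0)$, because the kernel $K = \sigma \ast \sigma$ is just barely not square-integrable: a naive Cauchy--Schwarz argument already incurs a logarithmic loss, visible by testing with $f_2(y) = K(y)\,\chi_{|y| < 2 - \varepsilon}$, where $\|f_2\|_2 \sim \sqrt{\log(1/\varepsilon)}$ while the bilinear pairing grows like $\log(1/\varepsilon)$. A Lorentz-space refinement is therefore essential to remain within the polygon. Sharpness of the exponents at the remaining vertices is witnessed by Knapp-type examples in which $f_1, f_2$ are indicators of small balls of radius $\varepsilon$ and $f_3$ is an $\varepsilon$-thickening of a circle of unit radius; this configuration also explains why the present polygon differs from that of Theorem \ref{thm:triangle}, since the absence of the third edge prevents the appearance of a diagonal vertex analogous to $(1/2,1/2,1/2)$.
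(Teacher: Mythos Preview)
Your argument for the five ``easy'' vertices $(1,0,0)$, $(0,1,0)$, $(0,0,1)$, $(\tfrac23,0,\tfrac23)$, $(0,\tfrac23,\tfrac23)$ is exactly the paper's: H\"older followed by the Strichartz $L^{3/2}\to L^3$ bound for the circular average. The difference lies at the two hard endpoints $(1,\tfrac12,0)$ and $(\tfrac12,1,0)$. The paper works on the Fourier side: after pulling out $\|h\|_\infty$ and $\|g\|_1$ it bounds $\|\sigma*\sigma*f\|_\infty$ by $\|\hat f\,\hat\sigma^2\|_1$, uses $|\hat\sigma(\xi)|\lesssim\langle\xi\rangle^{-1/2}$, and lands in the Sobolev space $H^{\epsilon,2}$ for every $\epsilon>0$. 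You instead stay in physical space, identify the kernel $K=\sigma*\sigma$ explicitly as $c\,|y|^{-1}(4-|y|^2)^{-1/2}$, observe $K\in L^{2,\infty}(\mathbb R^2)$, and invoke the Lorentz--Young (O'Neil) inequality to get the $L^1\times L^{2,1}\times L^\infty$ bound.

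Both routes are correct and both yield an endpoint estimate that is ``$\epsilon$ away'' from $L^2$ in the middle slot, so neither captures the strong-type endpoint and both must interpolate to fill the open polygon. Your approach is arguably cleaner: it avoids Fourier analysis entirely, the Lorentz norm $L^{2,1}$ is a sharper substitute for $L^2$ than $H^{\epsilon,2}$, and the physical-space picture makes the logarithmic failure of $L^2$ transparent via the explicit kernel. The paper's Fourier argument, on the other hand, generalizes verbatim to $\mathbb R^d$ (replacing $\tfrac12$ by $\tfrac{d-1}{d}$) with no extra computation of $\sigma*\sigma$, which is its main advantage.

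One small point: your final paragraph on sharpness via Knapp examples is plausible heuristics but not part of the theorem as stated---the paper in fact does \emph{not} claim sharpness here and explicitly flags two further necessary vertices $(\tfrac12,\tfrac56,\tfrac13)$ and $(\tfrac56,\tfrac12,\tfrac13)$ that remain out of reach.
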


\begin{remark} The two missing endpoints as imposed by the necessary conditions for the boundedness of $\Lambda_G$ defined in \eqref{eq:2-chain} are
$(\frac{1}{2}, \frac{5}{6},\frac{1}{3})$ and $(\frac{5}{6}, \frac{1}{2}, \frac{1}{3})$.

\vskip.125in 

For the endpoints $(1, \frac{1}{2}, 0)$ and  $(\frac{1}{2},1,0)$, we prove the Sobolev bounds which are sufficient to interpolate and obtain the desired $L^p$ bounds. 
\end{remark}

\vskip.125in 

\begin{remark} By comparing the exponents in Theorems \ref{thm:triangle} and \ref{thm:2-chain}, we see that the answer to Question 3 is positive in this case since the path on three vertices is a subgraph of the complete graph on three vertices. \end{remark} 


\vskip.25in 
 
\section{Proof of Theorem \ref{maintree}} 
 
\vskip.125in 
 
  The proof is by induction on the size of the tree. 
  For the sake of notational convenience, we are going to prove a stronger statement. 
  Let $x_1$ be the root of a tree $G$. Then 
 $$ \Lambda_G(f_1, \dots, f_n)=\int U(x_1)f_1(x_1) dx_1$$ for some function $U$. Note that $U$ implicitly depends on $f_2, \dots, f_n$. 
 We are going to prove that for any $q\geq 1$, there exists $p_1,\ldots, p_n >1$ such that 
 $$ {||U\cdot f_1||}_{L^q({\mathbb R}^d)} \leq C \prod_{i=1}^n {||f_i||}_{L^{p_i}({\mathbb R}^d)},$$ with 
\begin{equation} \label{goodqindices} \sum_{i=1}^n \frac{1}{p_i}>\frac{1}{q}. \end{equation} 
 
 To initiate the induction process, we first prove the estimate in the case $n=2$. We have 
 $$ \Lambda^K_G(f_1,f_2)=\int \int K(x_1,x_2) f_1(x_1)f_2(x_2) dx_2dx_1=\int T_K f_2(x_1) f_1(x_1) dx_1,$$ 
 so $U(x_1)=T_Kf_2(x_1).$ 
By H\"older's inequality, we have 
 $$ {||T_Kf_2 \cdot f_1||}_{L^q({\mathbb R}^d)} \leq  {||T_Kf_2||}_{L^{r_2}({\mathbb R}^d)} \cdot {||f_1||}_{L^{p_1}({\mathbb R}^d)},$$ 
 where $p_1, r_2 >1$ and $\frac{1}{p_1} + \frac{1}{r_2}=\frac{1}{q}$.
 
 \vskip.125in 
 
 Since $T_K$ is universally $L^p$-improving, there exists $p_2<r_2$ such that 
 $$ {||T_Kf_2||}_{L^{r_2}({\mathbb R}^d)} \leq C' {||f_2||}_{L^{p_2}({\mathbb R}^d)},$$ and we see that 
 $$ {||T_Kf_2 \cdot f_1||}_{L^q({\mathbb R}^d)} \leq C'' {||f_1||}_{L^{p_1}({\mathbb R}^d)} \cdot {||f_2||}_{L^{p_2}({\mathbb R}^d)},$$ with 
 $$ \frac{1}{p_1}+\frac{1}{p_2}>\frac{1}{p_1}+\frac{1}{r_2}=\frac{1}{q},$$ as desired. 
 
 \vskip.125in 
 
We now proceed to the core of the induction argument. Let $x_1$ denote the variable corresponding to the root of the tree. This can be accomplished by relabeling, if necessary. Suppose that $x_1$ is connected to $x_2, x_3, \dots, x_m$, which is again accomplished by relabeling. By assumption, $x_j$s for $2 \leq j \leq m$ are not connected and each of them is the root of the tree that does not intersect the trees rooted at the other $x_j$s. For $2 \leq j \leq m$, let $I_j$ denote the set of the indices associated with the tree rooted at $x_j$. 
Let $F_j(x_j)$ denote the function corresponding to the tree rooted at $x_j$.  

Integrating in the remaining variables, we see that 
 $$ \Lambda_G^K(f_1, \dots, f_n)=\int \dots \int \prod_{j=2}^m F_j(x_j) \cdot \prod_{j=2}^m K(x-x_j) dx_1 \dots dx_m \cdot f_1(x_1)dx_1$$ 
 $$ =\int \prod_{j=2}^m T_K(F_j)(x_1) \cdot f_1(x_1)dx_1.$$ 
 
 By Holder's inequality, one has 
 \begin{equation} \label{decouple:subtree}
 \big\|\prod_{j=2}^m T_K(F_j) \cdot f_1 \big\|_{L^q(\mathbb{R}^d)} \lesssim 
 {||f_1||}_{L^{p_1}({\mathbb R}^d)} \cdot \prod_{j=2}^m {||T_K(F_j)||}_{L^{r_j}({\mathbb R}^d)},
 \end{equation}
 with 
 \begin{equation} \label{Holder:tree}
 \frac{1}{p_1} + \sum_{j=2}^m \frac{1}{r_j} = \frac{1}{q}.
 \end{equation}
 
 By the assumption that $T_K$ is universally $L^p$-improving, for each $2 \leq j \leq m$, there exists $q_j < r_j$ such that \eqref{decouple:subtree} is bounded (up to a constant) by
 $$  {||f_1||}_{L^{p_1}({\mathbb R}^d)} \cdot \prod_{j=2}^m {||F_j||}_{L^{q_j}({\mathbb R}^d)}.$$
For each $2 \leq j \leq m$, one invokes the induction hypothesis so that
 $$\|F_j\|_{L^{q_j}(\mathbb{R}^d)} \lesssim \prod_{i \in I_j}{||f_i||}_{p_{i}}, $$ 
 for some $p_i>1$ and 
 \begin{equation} \label{indices:subtree}
 \sum_{i \in I_j} \frac{1}{p_i} > \frac{1}{q_j}.
 \end{equation}

One notices that by \eqref{indices:subtree} and \eqref{Holder:tree}, 
$$
\frac{1}{p_1} + \sum_{j=2}^m \sum_{i \in I_j} \frac{1}{p_j} > \frac{1}{p_1} + \sum_{j=2}^m \frac{1}{q_j} > \frac{1}{p_1} + \sum_{j=2}^m \frac{1}{r_j}  = \frac{1}{q},
$$
which completes the proof of the theorem. 
 \vskip.125in 

\vskip.25in 

\section{Proof of Theorem \ref{treeremoval}} 

\vskip.125in 

Suppose that $|G|=n$ and $|G'|=n'$. Performing the contraction operation in the context of $\Lambda_{G'}^K$ leads to 
$$ \Lambda^K_G(F_1, \dots, F_n) \leq C \prod_{j=1}^n {||F_j||}_{L^{r_j}({\mathbb R}^d)},$$ with 
$$ \sum_{j=1}^n \frac{1}{r_j}>1.$$ 

\vskip.125in 

By Theorem \ref{maintree}, or, more precisely, by its proof, 

$$  {||F_j||}_{L^{r_j}({\mathbb R}^d)} \leq \prod_{i=1}^{k(j)} {||f_i||}_{L^{p_{ji}}({\mathbb R}^d)},$$ where 
$$ \sum_{i=1}^{k(j)} \frac{1}{p_{ji}}>\frac{1}{r_j}.$$ 

\vskip.25in 

\section{Proof of Theorem \ref{joininggraphs}} 

\vskip.125in 

Label the graph $G$ such that the functions and vertices corresponding to the graph $G_1$ are labeled by $1,\ldots,n_1$ and the ones corresponding to the graph $G_2$ are labeled by $n_1, \ldots, n_1+n_2-1$ which makes the vertex labled by $n_1$ the single vertex in common between $G_1$ and $G_2$. Without loss of generality we can assume that $\Lambda_{G_2}^K$ is universally $L^p$ improving. By integrating through the vertices of $G_1$ and $G_2$ other than the common one we can write the form $\Lambda_{G}^K(f_1,\ldots,f_{n_1+n_2-1})$ as
$$ \int T_{n_1}^{G_{1}}(f_1,\ldots,f_{n_1 -1})(x_{n_1})\left(T_{n_1}^{G_{2}}(f_{n_1+1},\ldots,f_{n_1+n_2-1})(x_{n_1})f_{n_1}(x_{n_1}) \right)dx_{n_1} .$$
As $\Lambda_{G_1}^{K}$ has a non-trivial estimate at $n_1$ there exist $1 \leq p_1,\ldots,p_{n_1} \leq \infty$ with $p_{n_1}<\infty$ such that
$$ \Lambda_{G}^K(f_1,\ldots,f_{n_1+n_2-1}) \leq C \|f_1\|_{p_1}\ldots\|f_{n_1-1} \|_{p_{n_1-1}}\| T_{n_1}^{G_{2}}(f_{n_1+1},\ldots,f_{n_1+n_2-1})f_{n_1}\|_{p_{n_1}} $$
where $\frac{1}{p_{1}}+\ldots+\frac{1}{p_{n_1}}\geq 1$. We now use H\"{o}lder and bound
$$ \| T_{n_1}^{G_{2}}(f_{n_1+1},\ldots,f_{n_1+n_2-1})f_{n_1}\|_{p_{n_1}} \leq \| T_{n_1}^{G_{2}}(f_{n_1+1},\ldots,f_{n_1+n_2-1})\|_{q_{n_1}'}\|f_{n_1}\|_{q_{n_1}}$$
where $\frac{1}{q_{n_1}'}+\frac{1}{q_{n_1}}=\frac{1}{p_{n_1}}$ and since $p_{n_1}<\infty$ we can ensure $q_{n_1}'<\infty$. Since $\Lambda_{G_2}^K$ is universally $L^p$ improving there exist $1\leq q_{n_1+1},\ldots,q_{n_1+n_2-1}\leq\infty$ with $\frac{1}{q_{n_1+1}}+\ldots+\frac{1}{q_{n_1+n_2-1}}>\frac{1}{q_{n_1}'}$. Combined this now gives us a bound
$$ \Lambda_{G}^K(f_1,\ldots,f_{n_1+n_2-1}) \leq C \|f_1\|_{p_1}\ldots\|f_{n_1-1} \|_{p_{n_1-1}}\|f_{n_1}\|_{q_{n_1}}\| f_{n_1+1} \|_{q_{n_1+1}}\ldots \| f_{n_1+n_2-1} \|_{q_{n_1+n_2-1}} $$
where
\begin{align*}
\frac{1}{p_1} + \ldots + \frac{1}{p_{n_1 - 1}} + \frac{1}{q_{n_1}}+\frac{1}{q_{n_1+1}}+\ldots+\frac{1}{q_{n_1+n_2-1}} &>\frac{1}{p_1} + \ldots + \frac{1}{p_{n_1 - 1}} + \frac{1}{q_{n_1}}+\frac{1}{q_{n_1}'} \\
&= \frac{1}{p_1} + \ldots + \frac{1}{p_{n_1 - 1}} + \frac{1}{p_n} \\
&\geq 1
\end{align*}
which shows that $\Lambda_{G}^{K}$ is $L^p$ improving.

\vskip.25in 

\section{Case Study on Trilinear forms: proof of theorems \ref{thm:triangle} and \ref{thm:2-chain}} 
 
\vskip.25in 

This section is devoted to the study of the trilinear forms described in Theorem \ref{thm:triangle} and Theorem \ref{thm:2-chain}. 
 
 
\subsection{Complete graph on three vertices} 
We first state the necessary conditions for the boundedness exponents for the corresponding trilinear form. 
 \begin{theorem} 
 Let $G$ denote a complete graph on three vertices. 
 If the trilinear form has the boundedness property 
$$ \Lambda_G(f,g,h) \leq {||f||}_{L^{p_1}(\mathbb R^d)}{||g||}_{L^{p_2}(\mathbb R^d)}{||h||}_{L^{p_3}(\mathbb R^d)},$$
then the boundedness exponents $p_1, p_2, p_3$ satisfy the following conditions:
\begin{enumerate}
\item
$\frac{1}{p_1} + \frac{1}{p_2} + \frac{1}{p_3} \geq 1$;
\item
$\frac{1}{p_1}+ \frac{1}{p_2} + \frac{d}{p_3} \leq d$;
\item
$\frac{1}{p_1} + \frac{d}{p_2} + \frac{1}{p_3} \leq d$;
\item
$\frac{d}{p_1} +\frac{1}{p_2} + \frac{1}{p_3} \leq d$;
\item
$\frac{d+1}{p_1} + \frac{d+1}{p_2} +\frac{2d}{p_3} \leq 3d-1$;
\item
$\frac{d+1}{p_1} + \frac{2d}{p_2} +\frac{d+1}{p_3} \leq 3d-1$;
\item
$\frac{2d}{p_1} + \frac{d+1}{p_2} +\frac{d+1}{p_3} \leq 3d-1$.
\end{enumerate}
  \end{theorem}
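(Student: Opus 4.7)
The plan is to test $\Lambda_G$ against families of indicator functions $f_i = \chi_{E_i}$ for carefully chosen sets $E_i$, let a small parameter $\delta\to 0$ (or a large parameter $R\to\infty$), and read off each necessary inequality by comparing the orders of magnitude of both sides. Because $\Lambda_G$ is invariant under the $S_3$-symmetry permuting its three arguments (the complete graph on three vertices has all vertex permutations as automorphisms), it is enough to establish (1), (2) and (5); the conditions (3)--(4) then follow from (2) by relabeling, and (6)--(7) from (5).

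Condition (1) is the standard scaling/translation argument: I would take $f_i = \chi_{B(0,R)}$ with $R\to\infty$. After fixing one vertex, the remaining two vertices of a unit equilateral triangle are determined up to a bounded-volume set, so $\Lambda_G \gtrsim R^d$ (essentially counting translates), while $\|f_i\|_{p_i}\sim R^{d/p_i}$; sending $R\to\infty$ forces $\frac{1}{p_1}+\frac{1}{p_2}+\frac{1}{p_3}\geq 1$.

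For condition (2), I would localize near a fixed unit equilateral triangle $a_1,a_2,a_3$ with $|a_i-a_j|=1$, and take $f_3 = \chi_{B(a_3,\delta)}$, so that $\|f_3\|_{p_3}\sim \delta^{d/p_3}$. For $i=1,2$ I take $f_i$ to be the indicator of a ``fat spherical cap'' tangent to $S^{d-1}(a_3)$ at $a_i$, of radial width $\delta$ and tangential extent a fixed constant; then $|E_i|\sim \delta$ and $\|f_i\|_{p_i}\sim \delta^{1/p_i}$. Using the local parametrization of the configuration manifold $M=\{(x_1,x_2,x_3):|x_i-x_j|=1\}$ by the tuple $(x_3,\omega_1,\omega_2)$ with $\omega_i$ angular, one checks that for each $x_3\in E_3$ the tubes $E_1,E_2$ capture a constant-size portion of the admissible $(\omega_1,\omega_2)$-region, so $\Lambda_G \sim \delta^d$. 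The comparison then yields $\frac{1}{p_1}+\frac{1}{p_2}+\frac{d}{p_3}\leq d$.

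For condition (5), I would shrink the scales: take $f_3 = \chi_{B(a_3,\delta^2)}$ (volume $\sim \delta^{2d}$), and for $i=1,2$ take $f_i$ to be a Knapp cap tangent to $S^{d-1}(a_3)$ at $a_i$, of radial thickness $\delta^2$ and tangential length $\delta$ in each of the $d-1$ tangential directions (volume $\sim \delta^{d+1}$). The main step, and the principal obstacle, is the geometric lemma asserting that the Leray measure of $M\cap(E_1\times E_2\times E_3)$ is of order $\delta^{3d-1}$. To prove it I would expand the three constraints $|x_i-x_j|^2 = 1$ to second order around the equilateral base point, solve two of the constraints for the radial component of $\omega_1$ and the position of $\omega_2$ on the intersection sphere, and compute the resulting Jacobian directly — using the non-degeneracy of the equilateral triangle (i.e., the fact that the three edge vectors span in a controlled way) to guarantee the Jacobian is bounded above and below. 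Once the $\delta^{3d-1}$ lower bound on $\Lambda_G$ is in hand, comparison with $\|f_1\|_{p_1}\|f_2\|_{p_2}\|f_3\|_{p_3}\sim \delta^{(d+1)/p_1+(d+1)/p_2+2d/p_3}$ and letting $\delta\to 0$ yields condition (5), with (6) and (7) following by the same construction after relabeling.
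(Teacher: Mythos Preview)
The paper does not actually include a proof of this theorem; it is stated and immediately followed by a remark specializing to $d=2$, after which the paper turns to the sufficient conditions. (The analogous necessary-condition theorem for the $2$-chain \emph{is} proved in detail, using indicator functions of large balls, small balls, and thin annuli.) Your overall strategy --- exploit the $S_3$-symmetry of the complete graph and test against indicator functions of well-chosen sets --- is exactly the right one and matches what the paper does for the $2$-chain. Conditions (1) and (2) go through as you describe; for (2), the $2$-chain argument in the paper suggests the slightly cleaner choice $f_1=f_2=\chi_{A_\delta}$ (the full $\delta$-annulus of radius $1$ about the origin) and $f_3=\chi_{B_\delta}$, which already gives $\Lambda_G\sim\delta^d$ without localizing to caps.

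There is, however, a genuine gap in your treatment of (5) when $d>2$. With your isotropic caps (radial thickness $\delta^2$, tangential extent $\delta$ in \emph{each} of the $d-1$ tangential directions) and $E_3=B(a_3,\delta^2)$, the Leray measure of $M\cap(E_1\times E_2\times E_3)$ is $\sim\delta^{4d-3}$, not $\delta^{3d-1}$. Parametrize $M$ by $(x_3,\omega_1,\omega_2)$ with $x_1=x_3+\omega_1$, $\omega_1\in S^{d-1}$, and $x_2=x_3+\omega_2$ with $\omega_2$ on the $(d-2)$-sphere $\{\omega:|\omega|=1,\ \omega\cdot\omega_1=\tfrac12\}$; then $x_3$ contributes $\delta^{2d}$, the constraint $x_1\in E_1$ restricts $\omega_1$ to a cap of measure $\delta^{d-1}$, and $x_2\in E_2$ restricts $\omega_2$ to an arc of measure $\delta^{d-2}$ on the $(d-2)$-sphere. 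The resulting inequality is $(d+1)/p_1+(d+1)/p_2+2d/p_3\le 4d-3$, which coincides with (5) when $d=2$ but is strictly weaker for $d\ge 3$. The isotropic cap wastes one tangential direction of $E_2$, since the $(d-2)$-sphere for $x_2$ singles out a distinguished tangential direction at $a_2$ (the projection of $a_2-a_1$); to recover (5) in higher dimensions you would need an anisotropic Knapp-type example adapted to that direction, and the Jacobian computation you outline would have to track this anisotropy.
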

 
 \begin{remark} 
In dimension 2, the boundedness exponent $(\frac{1}{p_1}, \frac{1}{p_2}, \frac{1}{p_3})$ is contained in the polygon with the vertices:  $$(1,0,0), \  (0,1,0), \ (0, 0, 1), \ (\frac{2}{3}, \frac{2}{3}, 0), \ (\frac{2}{3}, 0, \frac{2}{3}), \ (0, \frac{2}{3}, \frac{2}{3}), \ (\frac{1}{2}, \frac{1}{2}, \frac{1}{2}),$$ 
which agree with the exponents described in Theorem \ref{thm:triangle} and verify the optimality claimed in the theorem.
\end{remark}

To prove the desired boundedness exponents in Theorem \ref{thm:triangle}, one observes that the trilinear form can be written as 
 $$
 \int f(x) B_{\frac{\pi}{3}} (g,h) (x) dx + \int f(x) B_{-\frac{\pi}{3}} (g,h) (x) dx,
 $$
 where the bilinear operators are called the generalized Radon transform in \cite{GIKL17} defined by
 \begin{equation} \label{eq:bilinear-Radon}
 B_\theta (g,h) (x) := \int g(x-y) h(x-\Theta y) d\sigma y,
 \end{equation}
 with $\Theta$ is the rotation by the angle $\theta$. 
All the endpoint estimates can be derived from the corresponding bounds for the bilinear operators established in \cite{GIKL17}, from which one can interpolate to obtain all the boundedness exponents. 

 \subsection{A chain on three vertices} 
We start with the necessity of the boundedness property for the trilinear form corresponding to the chain on three vertices. 
\begin{theorem}
If the trilinear form defined in \eqref{eq:2-chain} has the boundedness property 
$$ \Lambda_G(f,g,h) \leq {||f||}_{L^{p_1}(\mathbb R^d)}{||g||}_{L^{p_2}(\mathbb R^d)}{||h||}_{L^{p_3}(\mathbb R^d)},$$
then the boundedness exponents $p_1, p_2, p_3$ satisfy the following conditions:
\begin{enumerate}
\item
$\frac{1}{p_1} + \frac{1}{p_2} + \frac{1}{p_3} \geq 1$;
\item
$\frac{1}{p_1}+ \frac{1}{p_2} + \frac{d}{p_3} \leq d$;
\item
$\frac{d}{p_1} + \frac{d}{p_2} + \frac{1}{p_3} \leq 2d-1$;
\item
$\frac{d}{p_1} + \frac{1}{p_3} \leq d$;
\item
$\frac{d}{p_2} +\frac{1}{p_3} \leq d$. 
\end{enumerate}
\end{theorem}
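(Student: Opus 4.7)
My plan is to prove each inequality by producing a Knapp-type test triple, i.e., a one-parameter family of nonnegative indicator functions $(f_\delta, g_\delta, h_\delta)$ whose $L^{p_i}$ norms and whose trilinear form $\Lambda_G(f_\delta,g_\delta,h_\delta)$ can be computed up to multiplicative constants. The desired linear inequality on $(1/p_1,1/p_2,1/p_3)$ then drops out by letting the relevant scale parameter tend to $0$ or $\infty$. The three building blocks I will need are the small ball $B(0,\delta)$ of volume $\asymp \delta^d$, the large ball $B(0,R)$ of volume $\asymp R^d$, and the thin unit-sphere shell $S_\delta := \{x : \bigl|\,|x|-1\bigr|<\delta\}$ of volume $\asymp \delta$.

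The one geometric fact that does all the work is the asymptotic behavior of the spherical averaging operator $f \mapsto f*\sigma$ on these blocks: $\chi_{B(0,R)} * \sigma \asymp 1$ on $B(0,R-1)$ (the translated unit sphere sits inside the ball); $\chi_{B(0,\delta)} * \sigma \asymp \delta^{d-1}$ on all of $S_\delta$ (for $x\in S_\delta$, the intersection of $x - \mathbb S^{d-1}$ with $B(0,\delta)$ is a spherical cap of $\sigma$-measure comparable to $\delta^{d-1}$); and $\chi_{S_\delta} * \sigma \asymp 1$ on a ball of radius $\asymp \delta$ about the origin (a positive fraction of the unit sphere, shifted by $x$ with $|x|<\delta$, lies in the shell).

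Matching tests to conditions is then mechanical. For condition $(1)$, take $f=g=h=\chi_{B(0,R)}$ and send $R\to\infty$, so that $\Lambda_G \asymp R^d$ must be dominated by $R^{d(1/p_1+1/p_2+1/p_3)}$. For $(2)$, put $f=g=\chi_{S_\delta}$ and $h = \chi_{B(0,c\delta)}$ so that $(f*\sigma)(g*\sigma) \asymp 1$ on $\mathrm{supp}(h)$, which gives $\Lambda_G \asymp \delta^d$ against a norm product $\asymp \delta^{1/p_1+1/p_2+d/p_3}$. For $(3)$, exchange roles: $f=g=\chi_{B(0,\delta)}$ and $h=\chi_{S_\delta}$, yielding $\Lambda_G \asymp \delta^{2(d-1)}\cdot\delta = \delta^{2d-1}$ against $\delta^{d/p_1+d/p_2+1/p_3}$. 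For $(4)$, take $f = \chi_{B(0,\delta)}$, $h = \chi_{S_\delta}$, and $g = \chi_{B(0,R)}$ with $R$ fixed large enough that $g*\sigma \asymp 1$ on $S_\delta$; then $\Lambda_G \asymp \delta^d$ against $\delta^{d/p_1+1/p_3}R^{d/p_2}$, and letting $\delta\to 0$ at fixed $R$ yields $(4)$. Condition $(5)$ is the same example with $f$ and $g$ swapped.

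The main obstacle, modest as it is, is verifying that $\chi_{B(0,\delta)}*\sigma \asymp \delta^{d-1}$ holds uniformly across the entire shell $S_\delta$ rather than only near one particular point; this is the step that really uses the uniform $(d-1)$-dimensional structure of $\mathbb S^{d-1}$, and a parallel concern appears for the third asymptotic. Once these are in hand, each condition is one line of arithmetic. As a final sanity check, comparing the five conditions with Theorem~\ref{thm:2-chain} and the remark identifying the two missing endpoints $(\tfrac12,\tfrac56,\tfrac13)$ and $(\tfrac56,\tfrac12,\tfrac13)$ confirms that the listed inequalities are sharp in $d=2$ up to those points, so there is no hidden further constraint to be discovered.
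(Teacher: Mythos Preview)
Your proposal is correct and follows essentially the same approach as the paper: the same three building blocks (large ball, small ball, thin spherical shell) and the same pairing of blocks to conditions, with the same asymptotics for $A\chi_{B_\delta}$, $A\chi_{B_R}$, and $A\chi_{S_\delta}$. The one small difference is in conditions (4) and (5): the paper takes $g\equiv 1$ and computes $\|g\|_\infty=1$, which strictly speaking only tests $p_2=\infty$, whereas your choice $g=\chi_{B(0,R)}$ with $R$ fixed and $\delta\to 0$ yields the inequality for arbitrary finite $p_2$ as well---a minor but genuine improvement in rigor.
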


\begin{remark}
In dimension $2$, the boundedness exponent $(\frac{1}{p_1}, \frac{1}{p_2}, \frac{1}{p_3})$ is contained in the polygon with the vertices: \\
$$(1,0,0), \ (0,1,0), \ (0,0,1), \ (\frac{2}{3}, 0, \frac{2}{3}), \ (0, \frac{2}{3}, \frac{2}{3}), \ (\frac{1}{2},1,0), \ (1, \frac{1}{2}, 0), \ (\frac{1}{2}, \frac{5}{6}, \frac{1}{3}), \ (\frac{5}{6}, \frac{1}{2}, \frac{1}{3}).$$
\end{remark}

\begin{proof}
\begin{enumerate}
\item
Let $f(x) = g(x) = h(x) = \chi_{B_R}(x)$, where $B_R$ denotes the ball of sufficiently large radius $R$ centered at the origin. One observes that
\begin{equation} \label{pointwise:big-ball}
Af(x) = Ag(x) \gtrsim \chi_{B_R}(x). 
\end{equation}
Applying the pointwise estimate \eqref{pointwise:big-ball}, the trilinear form is bounded below by
\begin{equation*}
  \Lambda_G(f,g,h) = \int Af(x) Ag(x) h(x) dx \gtrsim |B_R| \gtrsim R^{d}. 
\end{equation*}
Meanwhile, 
$$
\|f\|_{p_1} \approx R^{\frac{d}{p_1}},  \ {||g||}_{p_1} \approx R^{\frac{d}{p_2}},  \ {||h||}_{p_3} \approx R^{\frac{d}{p_3}}.
$$
Thus the boundedness property implies 
\begin{equation} \label{ineq:exp:largeball}
R^d \lesssim R^{\frac{d}{p_1} + \frac{d}{p_2}+\frac{d}{p_3}}.
\end{equation}
In the case $R$ is sufficiently large, \eqref{ineq:exp:largeball} yields
$$
\frac{1}{p_1} + \frac{1}{p_2} +\frac{1}{p_3} \geq 1.
$$
\item
Let $f(x) = g(x) = \chi_{A_\delta}(x)$, where $A_\delta$ is the annulus of radius $1$ and thickness $\delta$ centered at the origin. Let $h(x) = \chi_{B_\delta}(x)$, where $B_\delta$ is the ball of radius $\delta$ (small) centered at the origin. 
We shall note that
\begin{equation*}
Af(x) = Ag(x) \gtrsim \chi_{B_\delta}(x),
\end{equation*} 
which gives 
$$
\Lambda_G(f,g,h) = \int Af(x) Ag(x) h(x) dx \gtrsim  |B_\delta| = \delta^d.
$$
Furthermore, one has 
$$
{||f||}_{p_1} \approx \delta^{\frac{1}{p_1}}, \ {||g||}_{p_2} \approx \delta^{\frac{1}{p_2}}, \ {||h||}_{p_3} \approx \delta^{\frac{d}{p_3}}.
$$
Due to the boundedness property 
\begin{equation*}
\delta^{d} \lesssim \delta^{\frac{1}{p_1} + \frac{1}{p_2} + \frac{d}{p_3}},
\end{equation*}
and the fact that $\delta \ll 1$, one concludes that
$$
\frac{1}{p_1} + \frac{1}{p_2} + \frac{d}{p_3} \leq d.
$$
\item
Let $f(x)=g(x)=\chi_{B_{\delta}}(x)$ and $h(x) = \chi_{A_{\delta}}(x)$. 
We have 
$$Af(x)=Ag(x) \gtrsim \delta^{d-1} \cdot \chi_{A_{\delta}}(x).$$ 
It follows that 
$$\Lambda_G(f,g,h) = \int Af(x) Ag(x) h(x) dx \gtrsim \delta^{2(d-1)} |A_\delta| = \delta^{2d-1},$$ 
whereas 
$$ {||f||}_{p_1} \approx \delta^{\frac{d}{p_1}}, \ {||g||}_{p_2} \approx \delta^{\frac{d}{p_2}}, \ {||h||}_{p_3} \approx \delta^{\frac{1}{p_3}},$$
which leads to the condition 
\begin{equation} \label{restr1} 
\frac{d}{p_1}+\frac{d}{p_2} + \frac{1}{p_3} \leq 2d-1. 
\end{equation} 
\item
Let $f(x) = \chi_{B_\delta}(x)$, $g(x) \equiv 1$ and $h(x) = \chi_{A_\delta}(x)$. One can easily verify that $Ag(x) \equiv 1$ and thus
$$
\Lambda_G(f,g,h) = \int Af(x) Ag(x) h(x) dx \gtrsim \delta^{d-1} |A_\delta| \gtrsim \delta^d.$$ 
On the other hand, one has
$$
 {||f||}_{p_1} \approx \delta^{\frac{d}{p_1}}, \ {||g||}_{\infty}=1, \ {||h||}_{p_3} \approx \delta^{\frac{1}{p_3}},
 $$
 which gives rise to the inequality 
 $$
 \frac{d}{p_1} + \frac{1}{p_3} \leq d. 
 $$
\item
Let $f(x) \equiv 1$, $g(x) = \chi_{B_\delta}(x)$ and $h(x)= \chi_{A_\delta}(x)$. The same reasoning for $(4)$ applies to this case. 
\end{enumerate}

\end{proof}
\vskip.125in 

We will now focus on the sufficient conditions for the boundedness exponents described in Theorem \ref{thm:2-chain}. The boundedness property which will be discussed in the following two theorems indeed holds for the input functions defined on $\mathbb R^d$ whereas Theorem \ref{thm:2-chain} concerns the case $d=2$.

The first theorem establishes boundedness of the trilinear form at the endpoints $(1,0,0)$,  $(0,1,0)$, $(0,0,1)$, $(\frac{d}{d+1}, 0, \frac{d}{d+1})$, $(0, \frac{d}{d+1}, \frac{d}{d+1})$.

\begin{theorem} \label{2chain}  Let $\Lambda_G$ denote the trilinear form defined in \eqref{eq:2-chain}. Then for every $(r_1,r_2,p_3)$ satisfying $r_1, r_2, p_3 \ge 1$, $\frac{1}{r_1}+\frac{1}{r_2}+\frac{1}{p_3}=1$, $\left(\frac{1}{p_1}, \frac{1}{r_1} \right)$ and $\left(\frac{1}{p_2}, \frac{1}{r_2} \right)$ are in ${\mathcal T}$, one has
 $$ \Lambda_G(f,g,h) \leq {||f||}_{p_1}{||g||}_{p_2}{||h||}_{p_3}.$$  
 \end{theorem}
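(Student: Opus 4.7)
The plan is to reduce the estimate to a two-step argument: a Hölder decoupling followed by the standard $L^p$-improving bounds for the circular averaging operator $A$ whose mapping properties describe the region $\mathcal{T}$ (the closed triangle with vertices $(0,0)$, $(1,1)$, $(\tfrac{d}{d+1}, \tfrac{1}{d+1})$ in the case of the sphere in $\mathbb{R}^d$, and in particular $(0,0)$, $(1,1)$, $(\tfrac{2}{3},\tfrac{1}{3})$ when $d=2$).

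The first step is to apply Hölder's inequality to the three-fold product inside the integral. Since the exponents satisfy $\tfrac{1}{r_1}+\tfrac{1}{r_2}+\tfrac{1}{p_3}=1$, one has
$$
\Lambda_G(f,g,h) \;=\; \int Af(x)\,Ag(x)\,h(x)\,dx \;\leq\; \|Af\|_{L^{r_1}(\mathbb{R}^d)}\,\|Ag\|_{L^{r_2}(\mathbb{R}^d)}\,\|h\|_{L^{p_3}(\mathbb{R}^d)}.
$$
This reduces the trilinear estimate to the linear mapping properties of $A$.

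The second step is to invoke the hypothesis that $(\tfrac{1}{p_1},\tfrac{1}{r_1})$ and $(\tfrac{1}{p_2},\tfrac{1}{r_2})$ both lie in $\mathcal{T}$. By definition of $\mathcal{T}$ as the region for which the circular averaging operator (more generally, the spherical averaging operator on $\mathbb{R}^d$) is $L^p \to L^q$ bounded, we obtain
$$
\|Af\|_{L^{r_1}(\mathbb{R}^d)} \lesssim \|f\|_{L^{p_1}(\mathbb{R}^d)}, \qquad \|Ag\|_{L^{r_2}(\mathbb{R}^d)} \lesssim \|g\|_{L^{p_2}(\mathbb{R}^d)}.
$$
Combining these with the Hölder step yields the desired trilinear bound
$$
\Lambda_G(f,g,h) \lesssim \|f\|_{L^{p_1}(\mathbb{R}^d)}\,\|g\|_{L^{p_2}(\mathbb{R}^d)}\,\|h\|_{L^{p_3}(\mathbb{R}^d)}.
$$

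There is essentially no obstacle: the proof is a routine Hölder-plus-black-box argument, and the only subtlety is bookkeeping to check that the endpoints claimed in the statement of Theorem~\ref{thm:2-chain}, namely $(1,0,0)$, $(0,1,0)$, $(0,0,1)$, $(\tfrac{d}{d+1},0,\tfrac{d}{d+1})$, and $(0,\tfrac{d}{d+1},\tfrac{d}{d+1})$, all arise from admissible choices of $(r_1,r_2,p_3)$ together with vertices of $\mathcal{T}$. For example, the endpoint $(\tfrac{d}{d+1},0,\tfrac{d}{d+1})$ corresponds to $p_2=\infty$, $r_2=\infty$, and $(1/p_1,1/r_1) = (\tfrac{d}{d+1},\tfrac{1}{d+1})$, which is the nontrivial vertex of $\mathcal{T}$, while the trivial vertices $(1,0,0)$, $(0,1,0)$, $(0,0,1)$ correspond to the $L^p\to L^p$ corners of $\mathcal{T}$. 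Standard interpolation across the convex region described in the hypothesis then gives the full family of estimates.
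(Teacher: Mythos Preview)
Your proof is correct and follows essentially the same approach as the paper: apply H\"older with exponents $r_1, r_2, p_3$ to bound $\Lambda_G(f,g,h)$ by $\|Af\|_{r_1}\|Ag\|_{r_2}\|h\|_{p_3}$, then invoke the $L^{p_i}\to L^{r_i}$ mapping properties of the averaging operator $A$ encoded by the triangle $\mathcal T$. Your additional endpoint bookkeeping is not strictly needed for the statement as written but is a helpful sanity check.
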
 
 
 \vskip.125in 
 
 \begin{proof}
 By H\"{o}lder, 
 
 $$  \Lambda_G(f,g,h) \leq {||Af||}_{r_1} \cdot {||Ag||}_{r_2} \cdot {||h||}_{p_3},$$ with $r_1,r_2 \ge 1$, $\frac{1}{r_1}+\frac{1}{r_2}+\frac{1}{p_3}=1$. The right hand side is bounded by a constant multiple of 
 
 $$ {||f||}_{p_1} \cdot {||g||}_{p_2} \cdot {||h||}_{p_3},$$ where $\left(\frac{1}{p_1}, \frac{1}{r_1} \right)$ and $\left(\frac{1}{p_2}, \frac{1}{r_2} \right)$ are contained in ${\mathcal T}$, the triangle with the endpoints $(0,0)$, $(1,1)$, and 
 $\left( \frac{d}{d+1}, \frac{1}{d+1} \right)$.

\end{proof}

 \vskip.125in 
 
The second theorem concerns the Sobolev bounds for the trilinear form at the endpoints $(\frac{d-1}{d}, 1,0)$ and $(1, \frac{d-1}{d},0)$.

 \begin{theorem} \label{endpoint:double-convolution}
 Let $\Lambda_G$ denote the trilinear form defined in \eqref{eq:2-chain}. Then for any $\epsilon >0$, 
 $$
 \Lambda_G(f,g,h) \lesssim \|f\|_{H^{\epsilon,\frac{d}{d-1}}} \|g\|_{1} \|h\|_{\infty}.
 $$
 Similarly, $
 \Lambda_G(f,g,h) \lesssim \|f\|_{1}\|g\|_{H^{\epsilon,\frac{d}{d-1}}}  \|h\|_{\infty}.
 $
 \end{theorem}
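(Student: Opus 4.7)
The plan is to reduce the trilinear estimate to a convolution inequality via the self-adjointness of the averaging operator $A$, and then to control the resulting kernel using a Bessel-potential smoothing together with the $L^p$-integrability of $\sigma * \sigma$. First I would absorb $h$ into its $L^\infty$ norm, reducing the problem to showing
$$\left|\int Af(x)\, Ag(x)\, dx\right| \lesssim \|f\|_{H^{\epsilon,\frac{d}{d-1}}}\, \|g\|_1.$$
Since $\sigma$ is an even measure, $A$ is self-adjoint, so this quantity equals $\int g(y)\,(f * K)(y)\, dy$ with $K := \sigma * \sigma$. H\"older's inequality in this form reduces the task to the scalar estimate $\|f * K\|_\infty \lesssim \|f\|_{H^{\epsilon, \frac{d}{d-1}}}$.

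Writing $J^s := (I-\Delta)^{s/2}$, I would use that this Fourier multiplier commutes with convolution to decompose $f * K = (J^\epsilon f) * (J^{-\epsilon} K)$. Young's inequality then gives
$$\|f * K\|_\infty \le \|J^\epsilon f\|_{\frac{d}{d-1}}\, \|J^{-\epsilon} K\|_d = \|f\|_{H^{\epsilon, \frac{d}{d-1}}}\, \|J^{-\epsilon} K\|_d,$$
so the entire argument reduces to verifying that $\|J^{-\epsilon} K\|_d < \infty$. This is the main obstacle.

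To handle it, I would use a coarea computation applied to the map $(y_1, y_2) \mapsto y_1 - y_2$ on $S^{d-1} \times S^{d-1}$ to obtain the explicit density
$$K(x) = c_d\, |x|^{-1}\, (1 - |x|^2/4)^{(d-3)/2}\, \chi_{\{|x| \le 2\}}.$$
Direct integration of $K^q$ in spherical coordinates then shows $K \in L^q(\mathbb{R}^d)$ for every $q < d$, the only restriction coming from the mild $|x|^{-1}$ singularity at the origin. Choosing $q = \frac{d}{1+\epsilon} < d$ and invoking the Bessel-potential version of the Hardy--Littlewood--Sobolev inequality $J^{-\epsilon}: L^q \to L^d$ (valid because $1/q - 1/d = \epsilon/d$) gives $\|J^{-\epsilon} K\|_d \lesssim \|K\|_q < \infty$. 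The symmetric estimate with $f$ and $g$ interchanged then follows at once from the symmetry of $\Lambda_G$ in its first two arguments.
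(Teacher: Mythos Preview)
Your argument is correct and reaches the same reduction as the paper --- both absorb $h$ in $L^\infty$, use the evenness/self-adjointness of $A$ to rewrite $\int Af\,Ag$ as $\langle g, f*K\rangle$ with $K=\sigma*\sigma$, and reduce to $\|f*K\|_\infty \lesssim \|f\|_{H^{\epsilon,d/(d-1)}}$ --- but you finish this last step differently. The paper stays on the Fourier side: it bounds $\|f*K\|_\infty$ by $\|\widehat{f}\,\widehat{\sigma}\,\widecheck{\sigma}\|_1$, invokes the stationary-phase decay $|\widehat{\sigma}(\xi)|\lesssim\langle\xi\rangle^{-(d-1)/2}$, and closes with H\"older and Hausdorff--Young. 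You instead work in physical space, computing the density of $\sigma*\sigma$ explicitly, showing $K\in L^q$ for $q<d$, and then transferring $\epsilon$ derivatives via $f*K=(J^\epsilon f)*(J^{-\epsilon}K)$ together with the Bessel-potential Sobolev embedding $J^{-\epsilon}:L^{d/(1+\epsilon)}\to L^d$. Your route is a bit more hands-on (it requires the coarea computation) but has the advantage of making the role of the kernel's integrability completely transparent; the paper's route is shorter and uses only the Fourier decay of $\widehat\sigma$, which is perhaps the more portable input. One small inaccuracy: in $d=2$ the factor $(1-|x|^2/4)^{-1/2}$ produces a second singularity at $|x|=2$, which also forces $q<2$; so the origin is not literally ``the only restriction,'' though the conclusion $K\in L^q$ for $q<d$ is unaffected.
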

 
 \begin{proof}
 Due to the symmetry between $f_1$ and $f_2$, we will focus on the proof of the first estimate. We assume that all the input functions are non-negative so that
 \begin{align*}
 \Lambda_G(f,g,h) = \int Af(x) Ag(x) h(x) dx \leq \|h\|_\infty \int Af(x)Ag(x)dx.
 \end{align*}
The integral on the right hand side can be explicitly written as
\begin{align*}
\int f(x-u) g(x-v) d\sigma(u)d\sigma(v) = \int f(z-u+v)g(z)dzd\sigma(u)d\sigma(v),
\end{align*}
which can be estimated by H\"older as follows:
$$
 \int f(z-u+v)g(z)dzd\sigma(u)d\sigma(v) \leq \|g\|_1 \bigg\| \int f(\cdot-u+v) d\sigma(u) d\sigma(v)\bigg\|_\infty.
$$
Now one can apply the Fourier inversion formula to deduce that
$$
\bigg\| \int f(\cdot-u+v) d\sigma(u) d\sigma(v)\bigg\|_\infty \leq \bigg\|\mathcal{F}\left(\int f(\cdot-u+v) d\sigma(u) d\sigma(v) \right) \bigg\|_1.
$$
We recall that 
$$
\mathcal{F}\left(\int f(\cdot-u+v) d\sigma(u) d\sigma(v) \right)(\xi) = \hat{f}(\xi) \hat{\sigma}(\xi)\check{\sigma}(\xi),
$$
where $\hat{\sigma}(\xi)$ and $\check{\sigma}(\xi)$ denote the Fourier transform and the inverse Fourier transform of the spherical measure and satisfy the decay $\<\xi\>^{-\frac{d-1}{2}}$. Thanks to the decay property, one has
\begin{align*}
&\bigg\|\mathcal{F}\left(\int f(\cdot-u+v) d\sigma(u) d\sigma(v) \right) \bigg\|_1 \leq \int |\hat{f}(\xi)| |\hat{\sigma}(\xi)||\check{\sigma}(\xi)| d\xi \lesssim \int \<\xi\>^{-(d-1)} |\hat{f}(\xi)| d\xi \\
& \lesssim \left(\int \<\xi\>^{-d} \<\xi\>^{-\epsilon\cdot\frac{d}{d-1}} d\xi \right)^{\frac{d-1}{d}} \left(\int \<\xi\>^{\epsilon\cdot d}|\hat{f}(\xi)|^d d\xi\right)^{\frac{1}{d}} \lesssim \|f\|_{H^{\epsilon, \frac{d}{d-1}}},
\end{align*}
where the last inequality follows from the Hausdorff-Young inequality. This completes the proof of Theorem \ref{endpoint:double-convolution}.
 \end{proof}
 
\vskip.125in

\vskip.25in 

\section{Regularly Realizable Graphs and Proof of Theorem \ref{allgraphsgood}}

\vskip.125in 

We start by describing graphs that are not regularly realizable. 

\vskip.125in 

\subsection{Examples of (not) regularly realizable graphs} \label{sec: reg realizable}

\vskip.125in 

Let $G$ be a graph with vertex set $\{1, \ldots, n\}$ and edge set $E$. A \emph{realization} of the graph $G$ in $\mathbb R^2$ is a list of points $(x_1, \ldots, x_n) \in \mathbb R^{2n}$ (with $x_i \in \mathbb R^2$ for each $i$) such that
\begin{equation}\label{eq: constraints}
    |x_i - x_j| = 1 \qquad \text{ whenever } \qquad \{i,j\} \in E.
\end{equation}


Definition \ref{def: regularly realizable} ensures the configuration space of points in $\mathbb R^{2n}$ satisfying the constraints in \eqref{eq: constraints} is a smooth, embedded submanifold of $\mathbb R^{2n}$ of dimension $2n - |E|$. 
\vskip.125in 

It is not always easy to check that a graph is regularly realizable in $\mathbb R^2$, but some examples are relatively straightforward. For example, if $G$ is a complete graph on three vertices in ${\mathbb R}^2$, the regular realizability condition is almost automatic. See also \cite{CIMP21} where similar calculations are made in the context of locally rigid configurations. Trees are also regularly realizable, but we have Theorem \ref{maintree} to deal with that case.



\begin{example}
    Let $G$ be the $4$-cycle with vertices $\{1,2,3,4\}$ and edge set
    $$
        E = \{\{1,2\},\{2,3\},\{3,4\},\{4,1\}\}.
    $$
    $G$ fails to satisfy condition (2) of Definition \ref{def: regularly realizable}. Note
    $$
        F(x_1,x_2,x_3,x_4) = (|x_1 - x_2|, |x_2 - x_3|, |x_3 - x_4|, |x_4 - x_1|).
    $$
    Its differential at a point in $M$ is a $4 \times 8$ matrix given by
    $$
        dF(x_1, x_2, x_3, x_4) = \begin{bmatrix}
            x_1 - x_2 & - (x_1 - x_2) & 0 & 0 \\
            0 & x_2 - x_3 & - (x_2 - x_3) & 0 \\
            0 & 0 & x_3 - x_4 & -(x_4 - x_1) \\
            -(x_4 - x_1) & 0 & 0 & x_4 - x_1
        \end{bmatrix},
    $$
    where each entry here is a $1 \times 2$ block. The model problematic point is
    $$
        (x_1,x_2,x_3,x_4) = ((0,0),(1,0),(2,0),(1,0)).
    $$
    At this point, the differential reads
    $$
        \begin{bmatrix}
            -e_1 & e_1 & 0 & 0 \\
            0 & -e_1 & e_1 & 0 \\
            0 & 0 & e_1 & -e_1 \\
            -e_1 & 0 & 0 & e_1
        \end{bmatrix},
    $$
    which has rank $3$. 
\end{example}

\subsection{A brief review of Leray measures}

We now move towards the proof of Theorem \ref{allgraphsgood}. We will need the formalism of Leray densities, which are natural measures on the fibers of smooth maps. In fact, as we will show over the section, the Schwartz kernel of $\Lambda_G$ is precisely the Leray measure on $M = F^{-1}(1)$ of Definition \ref{def: regularly realizable}. In order for the Leray measure to exist and be smooth, $M$ must contain no critical points of $F$. Hence, it is natural to assume $G$ be regularly realizable in Theorem \ref{allgraphsgood}.

Let $U$ be an open subset of $\mathbb R^n$ and $F : U \to \mathbb R^m$ be a smooth map for which $y_0 \in \mathbb R^m$ is a regular value of $F$ so that $M = F^{-1}(y_0)$ is a smooth embedded submanifold of $U$. In what follows, we will assume for simplicity's sake that $y_0 = 0$, though this assumption can certainly be removed.

There is a natural way that $F$ induces a measure on $M$. Take an approximation of the identity $\phi_\epsilon$ at $0$ as $\epsilon \to 0$, and consider the weak limit
$$
    \mu = \lim_{\epsilon \to 0} \phi_\epsilon \circ F.
$$
Indeed, this limit exists and does not depend on our choice of $\phi_\epsilon$. This is verified by direct calculation, as follows. Let $f$ be a continuous function on $U$ with compact support, and consider the limit
$$
    \lim_{\epsilon \to 0} \int_{U} f(x) \phi_\epsilon(F(x)) \, dx.
$$
Possibly by shrinking $U$, the components of $y = F(x)$ can be completed to coordinates of $U$ with the inclusion of some new coordinate functions $z_{m+1}, \ldots, z_n$. The change of coordinates $x = \Phi(y,z)$ allows us to directly calculate
\begin{multline*}
    \lim_{\epsilon \to 0} \iint_{\Phi^{-1}(U)} f(\Phi(y,z)) \phi_\epsilon(y) |\det d\Phi(y,z)| \, dy \, dz \\
    = \int_{\mathbb R^{n-m}} f(\Phi(0,z)) |\det d\Phi(0,z)| \, dz.
\end{multline*}
Indeed, $z \mapsto \Phi(0,z)$ is a local parametrization for $M$, and the integral can be written with respect to the volume density 
$$
    |\det d\Phi(0,z)| \, dz = \left| \frac{\partial(F,z)}{\partial x} \right|^{-1} \, dz
$$
This is the \emph{Leray density} on $M$, and it does not depend on our choice of complementary coordinates $z$.

Be warned, the Leray density on $M$ and the density induced by the restriction of the Euclidean metric to $M$ typically do not coincide, but each is always a smooth multiple of the other.



Again suppose $U \subset \mathbb R^n$ is open and $F : U \to \mathbb R^m$ is a smooth map with $0 \in \mathbb R^m$ as a regular value. Furthermore, $\psi : U \to U$ is a volume-preserving diffeomorphism with
$$
    F(\psi(x)) = F(x) \qquad \text{ for all } x \in U.
$$
Then, the restriction $\psi|_M : M \to M$ is also a diffeomorphism. We claim that the Leray density on $M$ is invariant under $\psi$. That is, if $dx'$ denotes the Leray density on $M$, then
$$
    \int_M f(\psi(x')) \, dx' = \int_M f(x') \, dx' \qquad \text{ for all continuous } f.
$$
This follows easily from the limit definition
$$
    \int_M f(x') \, dx' = \lim_{\epsilon \to 0} \int_U f(x) \phi_\epsilon(F(x)) \, dx
$$
of the Leray density.


\subsection{The Leray Density for Regularly Realizable Graphs}

Consider the approximation
$$
    \phi_\epsilon(y) = (2\epsilon)^{-|E|} \prod_{\substack{\{i,j\} \in E \\ i < j}} \chi_{[-1,1]}(\epsilon^{-1}(y_{ij} - 1))
$$
to the point-mass measure at $\mathbf 1$ in $\mathbb R^{|E|}$, and note
$$
    \phi_\epsilon(F(x_1,\ldots,x_n)) = (2\epsilon)^{-|E|} \prod_{\substack{\{i,j\} \in E \\ i < j}} \chi_{[-1,1]}(\epsilon^{-1}(|x_i - x_j| - 1)).
$$
On one hand, its weak limit as $\epsilon \to 0$ is the product
$$
    \prod_{\substack{\{i,j\} \in E \\ i < j}} \sigma(x_i - x_j),
$$
where $\sigma$ is the measure on the unit circle in $\mathbb R^2$. This is precisely the kernel of the multilinear form $\Lambda_G$. On the other hand, if $G$ is regularly realizable, this measure is also the Leray density on $M = F^{-1}(\mathbf 1)$. In particular, the multilinear form $\Lambda_G$ is well-defined and has a locally integrable kernel whenever $G$ is regularly realizable.

\begin{remark}\label{rem: brascamp-lieb} The discussion thus far allows us to write
$$
    \Lambda_G(f_1, \ldots, f_n) = \int_M \prod_{i = 1}^n f_i(\pi_i(x)) \, dx,
$$
where $dx$ here is the Leray density on $M$ and where $\pi_i : (x_1, \ldots, x_n) = x_i$ is the projection $M \to \mathbb R^2$ for each $i$. This setup bares striking resemblance to the setup for the Brascamp-Lieb inequality (see \cite{BrascampLieb} and the references therein). The classical Brascamp-Lieb inequality, however, only applies in the case where $M$ is a linear space. However, there have been some recent extensions of the Brascamp-Lieb inequality over smooth manifolds. The broadest and deepest result so far seems to be \cite{nonlinearBL}. The reader may also be interested in Lehec's work \cite{lehec} in which he presents a method for proving such multilinear inequalities with stochastic methods.
\end{remark}

We will use Fubini's theorem to integrate over the rigid motions of the plane, and then apply Strichartz's bounds for vertices connected by edges. Though simple in principle, the proof requires us to fill in a number of technical details.

Let $\mathcal R(\mathbb R^2)$ be the group of rigid motions in $\mathbb R^2$. This is a three-dimensional connected Lie group. It acts on $M$ via
$$
    \psi(x_1, \ldots, x_n) = (\psi(x_1), \ldots, \psi(x_n)).
$$
We also consider the subset
$$
    M_0 = \{(x_1, \ldots, x_n) \in M : x_1 = 0, \ x_2 = e_1 \}
$$
of configurations with the first two vertices $x_1$ and $x_2$ fixed at the origin and $e_1 = (1,0)$, respectively.

\begin{lemma}
    Assume the hypotheses of Theorem \ref{allgraphsgood} and let $M = F^{-1}(\mathbf 1)$ as above, where $\mathbf 1$ is a regular value of $F$. Then, $M_0$ is a compact, codimension-3, embedded submanifold of $M$, and the map
    $$
        (\psi, x) \mapsto \psi(x)
    $$
    is a diffeomorphism $\mathcal R(\mathbb R^2) \times M_0 \to M$.
\end{lemma}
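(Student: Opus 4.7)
The plan is to exhibit $M_0$ as the fiber of a natural submersion $\pi : M \to \mathbb R^2 \times S^1$, and then to invert the map $(\psi, x) \mapsto \psi \cdot x$ by exploiting the fact that fixing $(x_1, x_2) = (0, e_1)$ kills exactly the three-dimensional ambiguity supplied by the rigid motion group. Since $G$ is connected with $n \geq 2$, after relabeling I may assume $\{1,2\} \in E$, so every $x \in M$ satisfies $|x_1 - x_2| = 1$ and the map $\pi(x) := (x_1, x_2 - x_1)$ genuinely takes values in $\mathbb R^2 \times S^1$, with $M_0 = \pi^{-1}(0, e_1)$.

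The first and main step is to verify that $\pi$ is a submersion, which makes $M_0$ a smooth embedded codimension-$3$ submanifold of $M$. Given any $x \in M$, the infinitesimal generators of the $\mathcal R(\mathbb R^2)$-action on $M$ produce tangent vectors $v \in T_x M$ of the form $v_i = \delta t + \theta J_0 x_i$, where $\delta t \in \mathbb R^2$, $\theta \in \mathbb R$, and $J_0$ is the rotation by $\pi/2$; these lie in $T_x M$ because rigid motions preserve all edge lengths on $M$. A direct computation gives $d\pi_x(v) = (\delta t + \theta J_0 x_1,\, \theta J_0(x_2 - x_1))$, and the map $(\delta t, \theta) \mapsto d\pi_x(v)$ is a linear isomorphism from $\mathbb R^2 \oplus \mathbb R$ onto $\mathbb R^2 \oplus T_{x_2-x_1} S^1$: the vector $J_0(x_2 - x_1)$ is nonzero and spans the tangent line to $S^1$ at $x_2 - x_1$, so $\theta$ is determined by the second coordinate and then $\delta t$ by the first. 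Hence $d\pi_x$ is surjective at every $x$, and $M_0$ is a codimension-$3$ embedded submanifold of $M$.

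For the diffeomorphism, define $\Phi(\psi, x) = \psi \cdot x$. Given $y \in M$, let $R_y \in SO(2)$ be the unique rotation sending $e_1$ to the unit vector $y_2 - y_1$, and set $\psi_y(z) := R_y z + y_1$. Then $\psi_y$ is the unique rigid motion with $\psi_y(0) = y_1$ and $\psi_y(e_1) = y_2$, and $x := \psi_y^{-1} \cdot y$ lies in $M_0$ since the action preserves $M$ and by construction $x_1 = 0$, $x_2 = e_1$; any other preimage $(\psi', x')$ of $y$ must satisfy $\psi'(0) = y_1$ and $\psi'(e_1) = y_2$, forcing $\psi' = \psi_y$ and $x' = x$. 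So $\Phi$ is bijective; it is smooth as a restriction of the smooth group action, and $\Phi^{-1}(y) = (\psi_y, \psi_y^{-1} \cdot y)$ is smooth because $y \mapsto R_y$ is smooth on $\{y_2 \neq y_1\}$. Finally, compactness of $M_0$ follows from the connectedness of $G$: any vertex $i$ is joined to vertex $1$ by a $G$-path of length at most $n-1$, and the triangle inequality along unit edges gives $|x_i| = |x_i - x_1| \leq n - 1$ for every $x \in M_0$, so $M_0$ is bounded; it is also closed as the intersection of $M$ with the affine subspace $\{x_1 = 0, x_2 = e_1\}$, and hence compact. The only real obstacle is the submersion verification; everything else is routine bookkeeping once that is in hand.
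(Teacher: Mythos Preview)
Your proof is correct and follows essentially the same strategy as the paper: you use the infinitesimal action of $\mathcal R(\mathbb R^2)$ on $M$ to exhibit $M_0$ as a regular fiber, you invoke the uniqueness of the rigid motion sending $(0,e_1)$ to $(y_1,y_2)$ for bijectivity, and you use connectedness of $G$ plus the triangle inequality for compactness. The only cosmetic differences are that you package the submersion as $\pi:M\to\mathbb R^2\times S^1$ (which is arguably cleaner than the paper's coordinate map $H:U\to\mathbb R^3$, since your target already encodes the constraint $|x_2-x_1|=1$), and you verify the diffeomorphism by writing down a smooth inverse explicitly rather than checking that the differential is a linear isomorphism.
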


\begin{proof}
    First, $M_0$ is compact because it is closed and bounded. To see that $M_0$ is bounded, we use the triangle inequality and the assumption that $G$ is connected.
    
    Before proceeding, we set down some notation. Given a point $x = (x_1,\ldots,x_n) \in M$, we write $x_i = (x_i^1, x_i^2)$ for each $i = 1, \ldots, n$. We will also consider the open set $U \subset M$ given by $U = \{x : x_2^2 > 0\}$.
    
    To see $M_0$ is an embedded submanifold of the appropriate dimension, we consider the map $H : U \to \mathbb R^3$ given by
    $$
        H(x) = (x_1^1, x_1^2, x_2^2)
    $$
    where $x = (x_1, x_2, \ldots, x_n)$ and $x_i = (x_i^1, x_i^2)$ for $i = 1, 2$. We claim that $0 \in \mathbb R^3$ is a regular value of $H$, and hence the preimage $M_0 = H^{-1}(0)$ will be a codimension-3 embedded submanifold of $M$. To prove this claim, we consider the rigid motions generated by the following three vector fields on $\mathbb R^2$:
    \begin{align*}
        X(x,y) &= (1,0), \\
        Y(x,y) &= (0,1), \\
        Z(x,y) &= (-y, x).
    \end{align*}
    These vector fields are all elements of the Lie algebra of $\mathcal R(\mathbb R^2)$ and generate the horizontal translations, vertical translations, and rotations about the origin, respectively. Their respective pushforwards through $H$ whenever $(x_1, x_2) = (0,e_1)$ are given by
    \begin{align*}
        dH(X) &= (1,0,0) \\
        dH(Y) &= (0,1,1) \\
        dH(Z) &= (0,0,1),
    \end{align*}
    which span all of $\mathbb R^3$. Hence, the first part of our lemma is proved.

    Next, we show $(\psi, x) \mapsto \psi(x)$ is a diffeomorphism $\mathcal R(\mathbb R^2) \times M_0 \to M$. This map is certainly smooth, so we are left to check that (1) this map is a bijection, and (2) its differential at each point is a linear bijection. For (1) note that if $x \in M$, then there exists a unique rigid motion $\psi_x$ taking $(0,e_1)$ to $(x_1, x_2)$. We then construct an inverse map
    $$
        x \mapsto (\psi_x, \psi_x^{-1}(x))
    $$
    from which (1) is quickly verified.
    
    It suffices to verify (2) at the identity rigid motion $\psi = I$. Fix $x_0 \in M_0$. Consider the differential of our map at $(I, x_0)$,
    $$
        (\partial \psi, \partial x) \mapsto \partial \psi(x_0) + \partial x.
    $$
    To show (2), it suffices to show that this sum above is a direct sum. This however, follows from an identical argument to the one above where we proved $0$ is a regular value of $H$.
    $\qed$
\end{proof}
\bigskip

Let $d\psi$ denote the (left) Haar measure on $\mathcal R(\mathbb R^2)$. That is $d\psi$ is a smooth positive measure on $\mathcal R(\mathbb R^2)$ such that if $\phi$ is another rigid motion, then
$$
    \int_{\mathcal R(\mathbb R^2)} f(\phi \psi) \, d\psi = \int_{\mathcal R(\mathbb R^2)} f(\psi) \, d\psi
$$
for all compactly supported continuous functions $f$ on $\mathcal R(\mathbb R^2)$. Our next lemma ensures there exists a measure on $M_0$ such that the diffeomorphism $\mathcal R(\mathbb R^2) \times M_0 \to M$ in the previous lemma is measure-preserving, where $M$ of course comes equipped with the Leray measure.

\begin{lemma}\label{lem: fubini}
    Let $d\psi$ be the Haar measure on $\mathcal R(\mathbb R^2)$ and let $dx$ denote the Leray measure on $M$. There exists a smooth measure $dx'$ on $M_0$ such that
    $$
        \int_M f(x) \, dx = \int_{\mathcal R(\mathbb R^2)} \int_{M_0} f(\psi(x')) \, dx' \, d\psi
    $$
    for all compactly-supported continuous functions $f$ on $M$.
\end{lemma}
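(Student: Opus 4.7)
The plan is to pull the Leray measure $dx$ back through the diffeomorphism established in the previous lemma, and then argue, via the rigid-motion invariance of Leray densities, that the pullback splits as a product of Haar measure on $\mathcal R(\mathbb R^2)$ with a smooth measure on $M_0$. Let $\Phi : \mathcal R(\mathbb R^2) \times M_0 \to M$ denote the map $(\psi, x') \mapsto \psi(x')$, and set $\omega = \Phi^* dx$. Because $\Phi$ is a diffeomorphism, $\omega$ is a smooth positive measure on the product manifold, and the desired identity follows from the change-of-variables formula $\int_M f \, dx = \int_{\mathcal R(\mathbb R^2)\times M_0} (f\circ \Phi)\, \omega$ together with ordinary Fubini on the product, once one knows $\omega = d\psi \otimes dx'$ for a smooth positive measure $dx'$ on $M_0$.

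To establish this product structure, I first verify that $\omega$ is invariant under left translation on the first factor. For each $\phi \in \mathcal R(\mathbb R^2)$, the identity
$$\Phi(\phi\psi, x') = (\phi\psi)(x') = \phi\bigl(\Phi(\psi, x')\bigr)$$
intertwines left translation by $\phi$ on $\mathcal R(\mathbb R^2)$ with the diagonal action of $\phi$ on $M$. The diagonal action of a rigid motion $\phi$ on $\mathbb R^{2n}$ is Lebesgue-preserving (its Jacobian has absolute value $1$) and it preserves the map $F$ because $\phi$ preserves distances. By the invariance property reviewed in the subsection on Leray densities, $\phi$ therefore preserves $dx$, so $\omega$ is invariant under the first-factor left action.

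Finally, I invoke the standard fact that a smooth positive left-invariant measure on $G \times N$, where $G$ is a Lie group acting on itself by left translation, factors as $d\psi \otimes d\nu$ for some smooth positive measure $d\nu$ on $N$. Concretely, writing $\omega = h(\psi, x')\, d\psi\, d\nu_0(x')$ locally against an arbitrary reference smooth measure $d\nu_0$ on $M_0$, left-invariance together with the left-invariance of Haar measure forces $h(\psi, x') = h(x')$, and $dx' := h(x')\, d\nu_0(x')$ then defines a global smooth positive measure on $M_0$ since the local expressions transform consistently under changes of reference.

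The main technical obstacle is the factoring step in the last paragraph: one must be careful that the local densities $h$ obtained from different coordinate patches on $M_0$ glue into a single smooth measure and that smoothness and positivity survive the process. Both follow from uniqueness of Haar measure applied fiberwise above $M_0$, but the argument should be written out with some care to keep the exposition self-contained.
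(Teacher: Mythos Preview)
Your proposal is correct and follows essentially the same approach as the paper: both pull back the Leray measure through the diffeomorphism $\mathcal R(\mathbb R^2)\times M_0 \to M$, use the rigid-motion invariance of the Leray density to show the resulting Jacobian factor is independent of $\psi$, and absorb that factor into the measure on $M_0$. The paper phrases the last step as a direct integral computation $J(\psi,x') = J(\phi^{-1}\psi,x')$ rather than invoking the abstract factoring principle, but the content is identical.
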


\begin{proof}
    Let $dx'$ be any smooth positive measure on $M_0$. By the previous lemma, there exists a smooth positive function $J$ on $\mathcal R(\mathbb R^2) \times M_0$ for which
    $$
        \int_M f(x) \, dx = \int_{\mathcal R(\mathbb R^2)} \int_{M_0} f(\psi(x')) J(\psi, x') \, dx' \, d\psi.
    $$
    All we must show is that $J$ is constant in the $\psi$-coordinate. Since both our Leray density $dx$ on $M$ and the Haar measure $d\psi$ is invariant under action by a rigid motion $\phi$, we write
    \begin{align*}
        \int_{\mathcal R(\mathbb R^2)} \int_{M_0} f(\psi(x')) J(\psi, x') \, dx' \, d\psi &= \int_M f(x) \, dx \\
        &= \int_M f(\phi(x)) \, dx \\
        &= \int_{\mathcal R(\mathbb R^2)} \int_{M_0} f(\phi \psi(x')) J(\psi, x') \, dx' \, d\psi \\
        &= \int_{\mathcal R(\mathbb R^2)} \int_{M_0} f(\psi(x')) J(\phi^{-1} \psi, x') \, dx' \, d\psi.
    \end{align*}
    $f$ is arbitrary, so $J(\psi, x') = J(x')$ and $J(x') \, dx'$ is our desired measure on $M_0$.
    $\qed$
\end{proof}
\bigskip

We need just one more elementary lemma, which explicitly describes a Haar measure on $\mathcal R(\mathbb R^2)$. In what follows, $T_x \in \mathcal R(\mathbb R^2)$ denotes translation by $x \in \mathbb R^2$, and $R_\theta \in \mathcal R(\mathbb R^2)$ denotes rotation by an angle $\theta$ about the origin.

\begin{lemma}\label{lem: haar}
    Consider the parametrization
    \begin{align*}
        \mathbb R^2 \times S^1 &\to \mathcal R(\mathbb R^2) \\
        (x,\theta) &\mapsto T_x \circ R_\theta.
    \end{align*}
    The measure $dx \, d\theta$ induced by this parametrization is a left Haar measure on $\mathcal R(\mathbb R^2)$.
\end{lemma}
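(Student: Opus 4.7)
The plan is to work out the group law of $\mathcal R(\mathbb R^2)$ in the given parametrization and then verify left-invariance by a direct change-of-variables computation. First I would compute the composition $(T_{x_0}R_{\theta_0})\circ(T_y R_\phi)$ using the identity $R_\theta T_y = T_{R_\theta y} R_\theta$, which follows immediately from the linearity of the rotation $R_\theta$. This gives
$$
  (T_{x_0}R_{\theta_0})\circ(T_y R_\phi) \;=\; T_{x_0 + R_{\theta_0} y}\, R_{\theta_0 + \phi},
$$
so in the $(y,\phi)$ coordinates, left multiplication by the element corresponding to $(x_0,\theta_0)$ becomes the map
$$
  L_{(x_0,\theta_0)}(y,\phi) \;=\; (x_0 + R_{\theta_0} y,\ \theta_0 + \phi).
$$

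Next I would compute the Jacobian of $L_{(x_0,\theta_0)}$. The differential is block upper triangular with the $y$-block equal to the matrix of $R_{\theta_0}$ and the $\phi$-block equal to the identity; since $R_{\theta_0}\in SO(2)$ has determinant $1$, the total Jacobian determinant equals $1$. Therefore, for every compactly supported continuous function $f$ on $\mathcal R(\mathbb R^2)$, the standard change-of-variables formula gives
$$
  \int_{\mathbb R^2\times S^1} f\bigl(L_{(x_0,\theta_0)}(y,\phi)\bigr)\, dy\, d\phi
  \;=\; \int_{\mathbb R^2\times S^1} f(y,\phi)\, dy\, d\phi,
$$
which is exactly the left invariance condition for $dx\, d\theta$ under $\mathcal R(\mathbb R^2)$. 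Smoothness and positivity of the measure are obvious from the product structure, so this proves the lemma.

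I do not anticipate a serious obstacle: the whole argument reduces to the commutation $R_\theta T_y = T_{R_\theta y} R_\theta$ together with the fact that planar rotations have unit Jacobian. The only minor point worth noting is that $d\theta$ must be interpreted as the usual rotation-invariant measure on $S^1$ so that $\phi \mapsto \theta_0+\phi$ is measure-preserving, but this is built into the parametrization.
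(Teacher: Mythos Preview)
Your proof is correct and follows essentially the same approach as the paper: both compute the group law $(T_{x_0}R_{\theta_0})\circ(T_y R_\phi)=T_{x_0+R_{\theta_0}y}R_{\theta_0+\phi}$ and then verify left invariance by a change of variables. The paper performs the change of variables explicitly in three steps ($y\mapsto R_{-\theta_0}y$, then $y\mapsto y-x_0$, then $\phi\mapsto\phi-\theta_0$), while you package the same computation as a single Jacobian-equals-one argument; your differential is in fact block diagonal rather than merely block upper triangular, but the conclusion is the same.
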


\begin{proof} First, we note that if $(x,\theta)$ and $(x', \theta')$ are elements of $\mathbb R^2 \times S^1$, we have
$$
    (T_x \circ R_\theta) \circ (T_{x'} \circ R_{\theta'}) = T_{x + R_\theta x'} \circ R_{\theta + \theta'}.
$$
Hence, identifying a function $f$ on $\mathcal R(\mathbb R^2)$ with one on $\mathbb R^2 \times S^1$, we must verify
$$
    \int_{\mathbb R^2} \int_{S^1} f(x + R_\theta x', \theta + \theta') \, d\theta' \, dx' = \int_{\mathbb R^2} \int_{S^1} f(x', \theta') \, d\theta' \, dx',
$$
which follows from a change of variables $x' \mapsto R_{-\theta} x'$, followed by $x' \mapsto x' - x$, followed by $\theta' \mapsto \theta' - \theta$.
$\qed$
\end{proof}
\bigskip

With the lemmas in hand, we are ready to prove Theorem \ref{allgraphsgood}.

\bigskip

\begin{proof}
    Suppose without loss of generality that $\{1,2\}$ is in the edge set of $G$. Our aim is to show there exists a constant $C$ for which
    $$
        |\Lambda_G(f_1, \ldots, f_n)| \leq C \|f_1\|_{p_1} \|f_2\|_{p_2} \prod_{k = 3}^n \|f_k\|_{\infty}
    $$
    whenever $(1/p_1, 1/p_2)$ lies within the triangle with vertices $(1,0)$, $(0,1)$, and $(2/3, 2/3)$. We do this by using the change of variables formula of Lemma \ref{lem: fubini} to reduce the standard Strichartz bounds.

    Since the Schwartz kernel of $\Lambda_G$ is a positive measure, we take each of the functions $f_1, \ldots, f_n$ to be positive. We also take each of $f_3, \ldots, f_n$ to be bounded by $1$. We then write (with $dx$ as the Leray density on $M$)
    \begin{align*}
        \Lambda_G(f_1, \ldots, f_n) &= \int_M f_1(x_1) \cdots f_n(x_n) \, dx \\
        &\leq \int_M f_1(x_1) f_2(x_2) \, dx \\
        &= \int_{\mathcal R(\mathbb R^2)} \int_{M_0} f_1(\psi(0)) f_2(\psi(e_1)) \, dx' \, d\psi.
    \end{align*}
    Recall $M_0$ is compact and that the measure $dx'$ from Lemma \ref{lem: fubini} is smooth. Hence, the inner integral evaluates to a constant, say $C$. Continuing on, we have
    \begin{align*}
        &= C \int_{\mathcal R(\mathbb R^2)} f_1(\psi(0)) f_2(\psi(e_1)) \, d\psi.
    \end{align*}
    We now reparametrize the rigid motions by $\mathbb R^2 \times S^1$ as in Lemma \ref{lem: haar} and write
    \begin{align*}
        \int_{\mathcal R(\mathbb R^2)} f_1(\psi(0)) f_2(\psi(e_1)) \, d\psi &= \int_{\mathbb R^2} \int_{S^1} f_1(x) f_2(x + R_\theta e_1) \, d\theta \, dx \\
        &= \langle f_1, \sigma * f_2 \rangle,
    \end{align*}
    where $\sigma$ is the measure supported on the unit circle in $\mathbb R^2$. By Strichartz's bounds (\cite{Str71}), we have
    $$
        \langle f_1, \sigma * f_2 \rangle \lesssim \|f_1\|_{p_1} \|f_2\|_{p_2}
    $$
    with $(1/p_1, 1/p_2)$ lying in the convex hull of $(1,0), (0,1),$ and $(2/3, 2/3)$, as needed.
\end{proof}

\vskip.125in 

\section{Questions, problems and work in progress} 
\label{future} 

\vskip.125in 

In this section, we outline some of the related concepts that we plan to address in the sequel. 

\subsection{Hypergraphs} As we mentioned in the introduction, the problem studied in this paper can be formulated in terms of hypergraphs. More precisely, suppose that $M(f_1, \dots, f_k)$ is a $k$-linear operator with the kernel $K(x^1, \dots, x^k)$. Suppose that $H$ is a connected hypergraph. equipped with the edge map $E$ such that $E(i_1, \dots, i_k)=1$ if the vertices labeled by $i_1, \dots, i_k$ form a hyper-edge. Define $\Lambda_H^K(f_1, \dots, f_n)$ by the expression
$$ \int \dots \int \prod_{\{(i_1,\dots, i_k): E(i_1, \dots, i_k)=1\}} K(x^{i_1}, \dots, x^{i_k})f_1(x^1)f_2(x^2) \dots f_n(x^n) \prod_{j=1}^n dx_j.$$ 

Analogously to the linear case, it is interesting to ask how the structure or the hypergraph $H$ influences whether $\Lambda_H^K(f_1, \dots, f_n)$ is $L^p$-improving, in the same sense as before. For example, suppose that $d=2$ and 
$$ M(f_2,f_3)(x^1)=\lim_{\epsilon \to 0^{+}} \int \int \sigma^{\epsilon}(x^1-x^3)\sigma^{\epsilon}(x^1-x^3)\sigma^{\epsilon}(x^2-x^3)f_2(x^2)f_3(x^3)dx_2dx_3.$$

Let $H$ be the hyper-graph on $5$ vertices where the vertices labeled by $(1,2,3)$ are connected by a hyper-edge, and the vertices $(3,4,5)$ are connected by a hyper-edge. Then if we let 
$$ K^{\epsilon}(x^1,x^2,x^3)=\sigma^{\epsilon}(x^1-x^3)\sigma^{\epsilon}(x^1-x^3)\sigma^{\epsilon}(x^2-x^3),$$ we have 
$$ \Lambda_H^{K,\epsilon}(f_1,f_2,f_3,f_4,f_5)=\int \dots \int K^{\epsilon}(x^1,x^2,x^3)K(x^3,x^4,x^5)\prod_{j=1}^5 f_j(x^j)dx_j.$$

In this case, the right-hand side can be interpreted as $\Lambda_G^K$, where $G$ is a graph with $5$ vertices with edges at $(i,j)=(1,2), (2,3), (1,3), (3,4), (3,5), (4,5)$. 

\vskip.125in 

\begin{figure}[H]
\begin{tikzpicture}[node distance=2cm]

  \tikzstyle{every node}=[circle, draw, fill=black, inner sep=0pt, minimum width=4pt]

  \node[label=left:$x_1$] (x1) at (0,0) {};
  \node[label=left:$x_2$] (x2) at (-0.5,-1) {};
  \node[label=right:$x_3$] (x3) at (0.5,-1) {};

  \draw (x1) -- (x2);
  \draw (x1) -- (x3);
  \draw (x2) -- (x3);
  
  \node[label=right:$x_4$] (x4) at (1.5,-1) {};
  \node[label=right:$x_5$] (x5) at (1,-2) {};

  \draw (x3) -- (x4);
  \draw (x3) -- (x5);
  \draw (x4) -- (x5);

\end{tikzpicture}
\caption{Complete graph on three vertices, with an additional triangle} 
\label{twotriangles}
\end{figure}
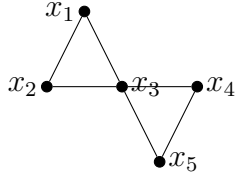

\vskip.125in 

While the example above can be easily expressed in the graph, rather than hypergraph language, the following case we will study in the sequel is very different. Let $\Pi(f,g)(x)$ denote the paraproduct, i.e 
$$ \Pi(f,g)(x)=p.v.\int f(x-t)g(x-s) K_{CZ}(t,s)dtds,$$
where $K_{CZ}$ denotes the Calder\'on-Zygmund kernel. Please see \cite{St-CZ} and \cite{CoifmanMeyer}  for a more detailed description of Caleder\'on-Zygmund operators and their multilinear variant, namely paraproduct, respectively.

Let $K(x^1,x^2,x^3) = K_{CZ}(x^3-x^1,x^3-x^2)$. Then
$$ <\Pi(f_1,f_2),f_3>=\int \int \int K(x^1,x^2,x^3) f_1(x^1)f_2(x^2)f_3(x^3)dx^1dx^2dx^3.$$

Let $H$ be a hypergraph on $4$ vertices where every triple of distinct vertices is connected by a hyper-edge. Then we have 
$$ \Lambda_H^K(f_1,f_2,f_3,f_4)=\int \dots \int \prod_{1 \leq i_1<i_2<i_3 \leq 4} K(x^{i_1}, x^{i_2}, x^{i_3}) \prod_{j=1}^4 f_i(x^j) dx^j.$$

From the work of Coifman and Meyer we know that 
$$ \Pi: L^{p_1}({\mathbb R}) \times L^{p_2}({\mathbb R}) \to L^p({\mathbb R})$$ for $\frac{1}{2} < p < \infty$, $1 < p_1, p_2 \leq \infty$ and $\frac{1}{p_1}+\frac{1}{p_2}=\frac{1}{p}$. Here it is not possible to do better than the Holder exponents, so the question is whether 
$$ \Lambda_H^K(f_1,f_2,f_3,f_4) \leq C \prod_{j=1}^4 {||f_j||}_{L^{p_j}({\mathbb R})}$$ with $\sum_{j=1}^4 \frac{1}{p_j}=1$ and suitable restrictions on the size of $p_i$s. This problem falls into the general framework of singular Brascamp-Lieb inequalities, which has been formulated in \cite{DurcikThiele} and \cite{Muscalu}. Though some cases have been investigated in \cite{DurcikThiele}, \cite{Muscalu} and later works, 
Though a wide class of multilinear singular integral operators (see, for example, \cite{LT97}, \cite{Muscalu07} and \cite{BM22}) have been well understood, the particular question we are interested in the sequel, such as the boundedness of the $4$-linear form
\begin{align*}
&\Lambda_H^K(f_1,f_2,f_3,f_4) = \int K(x^1,x^2,x^3) K(x^1,x^2,x^4) f_1(x^1) f_2(x^2) f_3(x^3) f_4(x^4) dx^1dx^2dx^3dx^4\\
=& \int f_1(x-t_1) f_2(x-s_1) f_3(x) f_4(x-t_1+t_2) K_{CZ}(t_1,s_1) K_{CZ}(t_2,s_1-t_1+t_2) dt_1dt_2ds_1dx,
 \end{align*}
remains open.  

\vskip.125in 

\subsection{Generalized Radon transforms} For this description, a generalized Radon transform is the linear operator of the form $T_K$, where the distribution kernel 
$$ K(x,y)=\int_0^{\infty} e^{2 \pi i \tau  (\phi(x,y)-t)} \psi(x,y) d\tau.$$ Here $\psi$ is a smooth cut-off function, $\phi: {\mathbb R}^d \times {\mathbb R}^d \to {\mathbb R}$ is a continuous function, smooth away from the diagonal, such that 
\begin{equation} \label{minimalphiassumptions} \{x: \phi(x,y)=t\} \ \text{and} \ \{y: \phi(x,y)=t\} \end{equation} are smooth immersed submanifolds of ${\mathbb R}^d$. In the case when $\phi(x,y)=|x-y|$ and $t=1$, we recover the spherical averaging operator 
$$ T_Kf(x)=\int_{S^{d-1}} f(x-y) d\sigma(y),$$ where $\sigma$ is the surface measure on the unit sphere $S^{d-1}$. 

\vskip.125in 

If one assumes, in addition, that 
\begin{equation} det
\begin{pmatrix} 
 0 & \nabla_{x}\phi \\
 -{(\nabla_{y}\phi)}^{T} & \frac{\partial^2 \phi}{dx_i dy_j}
\end{pmatrix}
\neq 0
\end{equation} on the set $\{(x,y): \phi(x,y)=t \}$. then one can use classical methods ($TT^{*}$) to see that 
$$ T_K: L^p({\mathbb R}^d) \to L^q({\mathbb R}^d) $$ for $(\frac{1}{p}, \frac{1}{q})$ in a triangle with the endpoints $(0,0)$, $(1,1)$, and $(\frac{d}{d+1}, \frac{1}{d+1})$. See, for example, \cite{St93}, \cite{So93} and the references contained therein. 

\vskip.125in 

We shall consider the following generalization of Definition \ref{def: regularly realizable}. 

\begin{definition}\label{def: general regularly realizable} Let $G$ be a graph with vertices $\{1,\ldots, n\}$ and edge set $E$. Let $F : \mathbb R^{2n} \to \mathbb R^{|E|}$ be the map
$$
    F(x_1, \ldots, x_n) = (\phi(x^i,x^j))_{ij} \qquad \text{ for } \{i,j\} \in E, \ i < j,
$$
where $\phi$ is as in (\ref{minimalphiassumptions}) above, and set $M = F^{-1}(\mathbf 1)$ where $\mathbf 1 \in \mathbb R^{|E|}$ is the vector with $1$ in every entry. We say that $G$ is \emph{regularly realizable} in $\mathbb R^2$ if $M$ is nonempty and $\mathbf 1$ is a regular value of $F$.
\end{definition}

\vskip.125in 

\subsection{General measures} As we mention in the introduction (\ref{Multilinearfractalmama}), we may consider the fractal version of our multi-linear inequality. The following result was recently established by Shengze Duan (\cite{Duan2023}). 

\begin{theorem} (\cite{Duan2023}) Let 
$$(Af)(x):=\int_{\Phi(x,y)=0}f(y)\psi(x,y)d\sigma_x(y),$$ 
where $\sigma_x(y)$ is the surface measure on the submanifold defined by the zero set of $\Phi$ for every fixed $x$, and $\Phi(x,y)$ is assumed to be smooth on $(\R^d)^2$, and its gradients, $\nabla_x\Phi$ and $\nabla_y\Phi$, for both $R^d$-variables $x,y$, are assumed to be nonzero on the support of some fixed smooth compactly supported $\psi$. Suppose that $\mu (B(x,r))\leq C r^s$ and that both $A$ and its adjoint are bounded from $L^2(\R^d)$ to the $L^2$-based Sobolev space $L^2_\alpha(\R^d)$. Then, if $s>d-\alpha$, $A$ is bounded from $L^p(\mu)$ to $L^{p'}(\mu)$ for $\frac{1}{p}\in[\frac{1}{2},\frac{2s+2\alpha-2d+1}{2s+2\alpha-2d+2})$, and from $L^p(\mu)$ to $L^{2}(\mu)$ for $\frac{1}{p}\in[\frac{1}{2},\frac{3s+2\alpha-2d}{2s})$.

\vskip.125in 

Moreover, if $\mu$ further satisfies the lower bound $\mu (B(x,r))\geq c r^s$ for $x$ on its support, then $A$ is bounded from $L^p(\mu)$ to $L^p(\mu)$ for $\frac{1}{p}\in[\frac{1}{2},\frac{s+2\alpha-d}{2\alpha})$. In general, exact endpoint values aside, the threshold for $s$ is sharp, and the Riesz diagram is complete with the $L^p$-$L^p$ and the $L^p$-$L^{p'}$-endpoint.
\end{theorem}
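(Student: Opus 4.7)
The plan is to establish all three estimates via a frequency-localized ($TT^*$-style) argument combined with Frostman-type energy estimates for $\mu$. First, the hypothesis that $A$ and $A^*$ are bounded from $L^2(\mathbb R^d)$ to $L^2_\alpha(\mathbb R^d)$ gives $A^*A : L^2(\mathbb R^d) \to L^2_{2\alpha}(\mathbb R^d)$, which is an averaging operator of order $2\alpha$. After a dyadic Littlewood--Paley decomposition $f = \sum_k P_k f$ at frequencies $|\xi| \sim 2^k$, each piece satisfies the smoothing bound $\|A P_k f\|_{L^2(\mathbb R^d)} \lesssim 2^{-k\alpha}\|P_k f\|_{L^2(\mathbb R^d)}$, with a matching bound on the dual side. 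I would also replace $A$ by a smoothed variant $A_\epsilon$ and aim for uniform bounds, taking the limit at the end.

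The crucial bridge to $\mu$-weighted norms is the classical Frostman estimate: for $\mu$ with $\mu(B(x,r)) \leq C r^s$, one has the averaged Fourier decay $\int_{|\xi| \leq R}|\hat\mu(\xi)|^2 \, d\xi \lesssim R^{d-s}$. By Plancherel, a function $g$ with Fourier support in $\{|\xi| \sim 2^k\}$ satisfies $\|g\|^2_{L^2(\mu)} \lesssim 2^{k(d-s)}\|g\|^2_{L^2(\mathbb R^d)}$. Chaining this on both input and output sides with the smoothing bound produces a dyadic geometric factor of order $2^{k(d-s-\alpha)}$ per block, summable exactly when $s > d - \alpha$. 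This yields the $L^2(\mu) \to L^2(\mu)$ baseline estimate.

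The three claimed ranges would then be recovered by interpolating this $L^2(\mu) \to L^2(\mu)$ bound against trivial endpoint estimates. For the $L^p(\mu) \to L^{p'}(\mu)$ case, the trivial endpoint is an $L^1(\mu) \to L^\infty(\mu)$ bound on each dyadic block coming from the sup-norm of its kernel, which grows no faster than $2^{kd}$; solving for the critical value of $1/p$ at which the resulting dyadic sum just converges gives the stated threshold $(2s+2\alpha-2d+1)/(2s+2\alpha-2d+2)$. For the $L^p(\mu) \to L^2(\mu)$ case, only one Frostman conversion is used (on the output), and interpolation against $L^1(\mu) \to L^2(\mu)$ yields $(3s+2\alpha-2d)/(2s)$. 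Finally, when $\mu$ is additionally Ahlfors $s$-regular via the lower bound $\mu(B(x,r)) \geq c r^s$, a $\mu$-adapted Calder\'on--Zygmund-type decomposition replaces Lebesgue-side interpolation, and the same dyadic balancing produces $L^p(\mu) \to L^p(\mu)$ on the range below $(s+2\alpha-d)/(2\alpha)$.

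The main obstacle I anticipate is the precise bookkeeping at the endpoints and establishing their sharpness. The three thresholds are the critical values at which their respective dyadic geometric series fail to converge, so strictness of each inequality is automatic on the positive side; demonstrating sharpness requires exhibiting Frostman measures and test functions that saturate each bound separately. I would expect Knapp-type examples built from $s$-dimensional measures concentrated near directions where $\{\Phi(x,y)=0\}$ has degenerate curvature, tested against frequency-localized extremizers adapted to those scales. The other technical point to watch is propagating every bound uniformly through the mollification, so that the estimates descend to the original, possibly singular, operator.
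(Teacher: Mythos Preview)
The paper does not prove this theorem. It appears in Section~\ref{future} (``Questions, problems and work in progress'') as a quoted result of Duan~\cite{Duan2023}, introduced with the sentence ``The following result was recently established by Shengze Duan,'' and is stated without proof. There is therefore no argument in the paper against which to compare your proposal; any comparison would have to be made against the cited preprint arXiv:2308.07492 itself.

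That said, your outline is broadly the standard route for results of this type: Littlewood--Paley localization, the $L^2 \to L^2_\alpha$ smoothing converted to per-block decay $2^{-k\alpha}$, the Frostman energy bound $\|P_k g\|_{L^2(\mu)}^2 \lesssim 2^{k(d-s)}\|P_k g\|_{L^2}^2$, and interpolation against kernel-based endpoint bounds. One point to be careful about: the blocks $P_k A f$ do not have compact Fourier support unless you also localize on the output side, so the chaining ``Frostman on input, smoothing, Frostman on output'' requires controlling the off-diagonal pieces $P_j A P_k$, typically via the almost-orthogonality coming from the Sobolev bound. Your claim that the $L^1(\mu)\to L^\infty(\mu)$ block bound is of size $2^{kd}$ also deserves scrutiny, since the relevant kernel is that of $P_j A P_k$ against the measure $\mu$, not Lebesgue measure; the correct endpoint size is what actually pins down the numerators in the thresholds. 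Finally, for the $L^p(\mu)\to L^p(\mu)$ range under Ahlfors regularity, a Calder\'on--Zygmund decomposition on a non-doubling or fractal measure is delicate, and you should check whether Duan's argument proceeds that way or instead via a direct $L^1$-type endpoint combined with interpolation.
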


Using this result as a springboard, it should be possible to obtain suitable variants of Theorem \ref{allgraphsgood}, Theorem \ref{thm:triangle}, and Theorem \ref{thm:2-chain}. Moreover, Theorem \ref{maintree}, Theorem \ref{treeremoval}, and Theorem \ref{joininggraphs} should hold in a general setting of measures. 




\vskip.25in


\begin{thebibliography}{8}

\bibitem{BIKP23} P. Bhowmick, A. Iosevich, D. Koh, and T. Pham, {\it Multi-linear forms, graphs, and $L^p$-improving measures in ${\mathbb F}_q^d$}, (2023), (arXiv:2301.00463). 

\bibitem{BM22} C. Benea and C. Muscalu, {\it Mixed-norm estimates via the helicoidal method}, arXiv:2007.01080.

\bibitem{nonlinearBL} J. Bennett, N. Bez, S. Buschenhenke, M. G. Cowling and T. C. Flock, {\it {On the nonlinear Brascamp--Lieb inequality}}, Duke Mathematical Journal, \textbf{169}(17), 3291 -- 3338 (2020). \url{https://doi.org/10.1215/00127094-2020-0027}.

\bibitem{BrascampLieb} J. Bennett, A. Carbery, M. Christ and T. Tao, {\it The {Brascamp--Lieb} Inequalities: Finiteness, Structure and Extremals}, Geometric and Functional Analysis, \textbf{17}(5), 1343--1415, 2008. \url{https://doi.org/10.1007/s00039-007-0619-6}.


\bibitem{BIT15} M. Bennett, A. Iosevich and K. Taylor {\it Finite chains inside thin subsets of ${\mathbb R}^d$}, Anal. PDE \textbf{9} (2016), no. 3, 579-614.

\bibitem{CIMP21} N. Chatzikonstantinou, A. Iosevich, S. Mkrtchyan and J. Pakianathan, {\it Rigidity, graphs and Hausdorff dimension},
Springer Proc. Math. Stat., 347 Springer, Cham, (2021), 73-106.

\bibitem{CoifmanMeyer} R. Coifman and Y. Meyer, {\it Wavelets. Calder\'on-Zygmund and multilinear operators}, translated
from the 1990 and 1991 French originals by David Salinger. Cambridge Studies in Advanced
Mathematics, 48. Cambridge University Press, Cambridge, (1997).

\bibitem{Duan2023} S. Duan, {\it Generalized Radon transforms on fractal measures}, arXiv:2308.07492, (2023). 

\bibitem{DurcikThiele} P. Durcik and C. Thiele, {\it Singular Brascamp-Lieb: A Survey}, pp. 321-349 in Geometric aspects of harmonic analysis (Cortona, Italy, 2018), edited by P. Ciatti and A. Martini, Springer INdAM Ser. 45, Springer, (2021). 

\bibitem{EIT11} S. Eswarathasan, A. Iosevich and K. Taylor, {\it Fourier integral operators, fractal sets, and the regular value theorem}, (English summary) Adv. Math. \textbf{228} (2011), no. 4, 2385-2402.

\bibitem{Fal86} K. J. Falconer, {\it On the Hausdorff dimensions of distance sets} Mathematika \textbf{32} (1986), 206-212.




\bibitem{GIKL17} A. Greenleaf, A. Iosevich, B. Krause and A. Liu,
{\it Bilinear Generalized Radon Transform in the plane}, arXiv:1704.00861.

\bibitem{GIT22} A. Greenleaf, A. Iosevich and K. Taylor, {\it On $k$-point configuration sets with nonempty interior}, (arXiv:2005.10796), (accepted for publication in Mathematika), (2021).



\bibitem{IKSTU19} A. Iosevich, B. Krause, E. Sawyer, K. Taylor and I. Uriarte-Tuero, {\it Maximal operators: scales, curvature and the fractal dimension},  Anal. Math. 45 (2019), no. 1, 63-86.

\bibitem{IT19} A. Iosevich, K. Taylor, {\it Finite trees inside thin subsets of ${\mathbb R}^d$}, Modern methods in operator theory and harmonic analysis, 51-56, Springer Proc. Math. Stat., 291, Springer, Cham, (2019).

\bibitem{LT97} M. Lacey and C. Thiele. {\it $L^p$ estimates on the bilinear Hilbert transform for $2 < p < \infty$}, Annals of
Mathematics, 164:693-724, (1997).

\bibitem{lehec}
J. Lehec, 
``Borell's Formula on a Riemannian Manifold and Applications,''
in \textit{Convexity and Concentration}, 
E. Carlen, M. Madiman, and E. M. Werner (Eds.), 
Springer New York, New York, NY, 2017, 
pp. 267--284. 
ISBN: 978-1-4939-7005-6.

\bibitem{L73} W. Littman, {\it $L^p$-$L^q$-estimates for singular integral operators arising from hyperbolic equations}, Partial differential equations (Proc. Sympos. Pure Math., Vol. XXIII, Univ. California, Berkeley, Calif., 1971), pp. 479-481. Amer. Math. Soc., Providence, R.I., (1973). 

\bibitem{M95} P. Mattila, {\it Geometry of sets and measures in Euclidean spaces}, Cambridge University Press, \text{volume} 44, (1995)..

\bibitem{Mat2015} P. Mattila, {\it Fourier Analysis and Hausdorff Dimension}, Cambridge University Press, (2015).

\bibitem{Muscalu07} C. Muscalu, {\it Paraproducts with flag singularities I. A case study}, Rev. Mat. Iberoamericana, vol. 23 705-742, (2007). 

\bibitem{Muscalu} C. Muscalu and Y. Zhai,  {\it Five-linear singular integral estimates of Brascamp-Lieb-type}, Anal. PDE 15(4): 1011-1095 (2022).

\bibitem{SWY19} Z. Shi, D. Wu, and D. Yan, {\it On the multilinear fractional integral operators with correlation kernels}, J. Fourier Anal. Appl. 25 (2019), no.2, 538-587.

\bibitem{So93} C. Sogge, {\it Fourier integrals in classical analysis}, Second Edition, \textbf{210}, Cambridge University Press, (2017).

\bibitem{St93} E. M. Stein, {\it Harmonic Analysis}, Princeton University Press, (1993).

\bibitem{St-CZ} E. M. Stein, {\it Singular integrals and differentiability properties of functions}, Princeton Mathematical
Series, No. 30 Princeton University Press, (1970).

\bibitem{Str71} R. Strichartz, {\it Convolutions with kernels having singularities on a sphere}, Trans. Amer. Math. Soc. \textbf{148} (1970) pages 461-471.


\bibitem{W04} T. Wolff, {\it Lectures on harmonic analysis} Edited by Laba and Carol Shubin. University Lecture Series, \textbf{29}. American Mathematical Society, Providence, RI, (2003).



\end{thebibliography}
\end{document}